\documentclass[preprint,12pt]{elsarticle}
\usepackage{latexsym}
\usepackage{amsmath}
\usepackage{graphicx}
\usepackage{float}
\usepackage{subcaption}
\usepackage{xcolor}
\usepackage{amssymb}

\DeclareGraphicsExtensions{.ps, .eps, .pdf, .png}
\textwidth 5.8in \textheight 9.2in
\topmargin .5in \oddsidemargin 1.1in
\voffset -4.1cm \hoffset -2.5cm

\newcommand{\ta}{\mathcal{H}_{B(z,r)}(x)}
\newcommand{\tb}{\mathcal{H}_{B(z,r)}}
\newcommand{\eeq}{\end{equation}}
\newcommand{\beq}{\begin{equation}}
\newcommand{\nuq}[1]{\label{#1} \eeq}

\newcommand{\h}{\mathcal{H}}
\newcommand{\mz}{\mu(B(z,r))}

\newtheorem{remark}{Remark}
\newtheorem{lemma}{Lemma}

\newtheorem{proposition}{Proposition}
\newenvironment{proof}[1][Proof]{\noindent\textbf{#1.} }{\ \rule{0.5em}{0.5em}}

\journal{Physica D}
\begin{document}
\begin{frontmatter}
\title{
Generalized dimensions, large deviations and the distribution of rare events}
\author{Th\'eophile Caby\fnref{labelgiorgio,lasa}}
\author{Davide Faranda\fnref{lapa1,lada}}
\author{Giorgio Mantica\fnref{labelgiorgio,labelg2,labelg3}}
\author{Sandro Vaienti\fnref{lasa}}
\author{Pascal Yiou\fnref{lapa1}}

\address[labelgiorgio]{Center for Nonlinear and Complex Systems, Dipartimento di Scienza ed Alta Tecnologia, Universit\`a degli Studi dell' Insubria, Como, Italy}
\address[lasa]{Aix Marseille Universit\'e, Universit\'e de Toulon, CNRS, CPT, 13009 Marseille, France}
\address[lapa1]{Laboratoire des Sciences du Climat et de l'Environnement, UMR 8212 CEA-CNRS-UVSQ, IPSL and Universit\'e Paris-Saclay, 91191 Gif-sur-Yvette, France}
\address[lada]{London Mathematical Laboratory, 8 Margravine Gardens, London, W6 8RH, UK}
\address[labelg2]{INFN sezione di Milano, Italy}
\address[labelg3]{Indam, Gruppo Nazionale di Fisica Matematica, Italy}

\begin{abstract}
Generalized dimensions of multifractal measures are usually seen as static objects, related to the scaling properties of suitable partition functions, or moments of measures of cells. When these measures are invariant for the flow of a chaotic dynamical system, generalized dimensions take on a dynamical meaning, as they provide the rate function for the large deviations of the first hitting time, which is the (average) time required to connect any two different regions in phase space. We prove this result rigorously under a set of stringent assumptions. As a consequence, the statistics of hitting times provides new algorithms for the computation of the spectrum of generalized dimensions. Numerical examples, presented along with the theory, suggest that the validity of this technique reaches far beyond the range covered by the theorem.

We state our result within the framework of extreme value theory. This approach reveals that hitting times are also linked to dynamical indicators such as stability of the motion and local dimensions of the invariant measure. This suggests that one can use local dynamical indicators from finite time series to gather information on the multifractal spectrum of generalized dimension. We show an application of this technique to experimental data from climate dynamics.

\end{abstract}

\begin{keyword}
Generalized multifractal dimensions, hitting times, large deviations, extreme value theory, climate dynamics.
\end{keyword}

\end{frontmatter}

\section{Introduction and summary of the paper}
\label{sec1}

Generalized dimensions are a primary tool for the analysis of multifractal measures, {\em i.e.}, measures whose local densities feature a range of different scaling exponents.
Interest in these quantities originated in the eighties of the last century \cite{grass,grassp,multif1}, primarily for the study of chaotic attractors and fully developed turbulence \cite{uriel,benzi} and rapidly became important also from the mathematical viewpoint. The combined effort of physicists and mathematicians lead to the development of the so--called thermodynamical formalism \cite{rue,pesin-book,pesjstat} in which generalized dimensions play a major role.  For the sake of numerical experiments, but also of application to empirical data, many different techniques have been proposed along the years for the numerical calculation of generalized dimensions (we just quote \cite{grassp,holger,remo,antonio,rolf,rie,peinke} because even a partial list of references would require a full paper).

In this work we link generalized dimensions and the recurrence properties of the dynamics. In the same line of thought of Kac's theorem \cite{kac}, and more generally of ergodic theory, we study the connection between a {\em dynamical} quantity, the hitting time of a small set, and a {\em static} quantity, the statistical distribution of the measure of small balls.

This approach has been initiated in \cite{grass,gra2,natureh}:  generalized dimensions can be derived from the moments of the so--called first return time, the length of time required for the dynamics to return close to a chosen initial point on the attractor. Numerical experiments in \cite{noiprl} showed applicability and failures of this technique: in \cite{iojstat} the relation between dimensions generated via return times and the original quantities has been examined rigorously and in full generality, also providing explicit examples and counter--examples.

A related result will be derived herein, by considering hitting times \cite{CU}, rather than return times: hitting times are related to targeting small regions of phase space, starting from a different, arbitrary point. {\bf Proposition \ref{prop-1}} in Section \ref{sec-largedev} shows that dimensions indexed by $q \leq 2$ (the meaning of this index, which is related to the order of moments, will be explained in the next section) can be computed in this way. In Section \ref{sub-hit}, the abstract theory is translated into a numerical algorithm and applied to the test cases of the Arnol'd cat map and the H\'enon attractor.

A second fundamental point of this paper is to show that generalized dimensions yield the rate function for the large deviations of the first hitting time of a ball of given radius.
They quantify the rate at which the probability of observing ``non typical'' values diminishes when the radius goes to zero. Our result, {\bf Proposition \ref{prop-two}} in Section \ref{sec-largedev}, parallels a previous investigation \cite{CU} where the target set in phase space is a dynamical cylinder, rather than a ball. Rigorous theory is presented for the case of conformal repellers\footnote{These are the invariant sets of uniformly expanding $C^{1+\alpha}$ maps, defined on smooth manifolds and whose derivative is a scalar times an isometry. The repeller arises as the attractor of  pre--images of the map, see \cite{ba} for an exhaustive description. Dynamically generated Cantor sets on the line, Iterated Function Systems with the open set condition, disconnected hyperbolic Julia sets, are all examples \cite{dem} of conformal repellers. It is worth mentioning that such repellers can be coded by a subshift of a finite type and they support invariant measures which are Gibbs equilibrium states. This makes them particularly suited for the application of the thermodynamic formalism.} but we believe that similar results hold in more general settings.
A numerical illustration of the large deviation statistics is presented in Section \ref{sub-local}, in the case of a dynamical system evolving on  the Sierpinski gasket.

To obviate the limitation of the hitting technique to a part of the spectrum of generalized dimensions, in Section \ref{sec-gdevt} we continue the investigation of a preceding paper by some of the present authors \cite{D2}, in which the correlation dimension (the dimension of index $q=2$) was computed by means of extreme value theory (EVT).  The dynamical extremal index (DEI) appearing  in the Gumbel's limiting law followed as a by-product and it was interpreted as the rate of backward contraction on the unstable subspaces, a quantity closely related to positive Lyapunov exponents. We now extend this technique to the case of arbitrary, positive, integer index $q$.  In Sections \ref{sub-exce} and \ref{sub-block} the abstract theory is applied to specific examples.
The associated DEI is again related to Lyapunov exponents, but in Section \ref{sub-dei}, via {\bf Proposition \ref{prop-20}} and the subsequent analysis, we show that it is also affected by the variation of the invariant density and by the  lack of uniform hyperbolicity of the system.

We consider the rate function of the hitting time as a  way to detect and quantify the presence of  rare events in the dynamics. These events are produced by the presence of points where the local dimensions and the first hitting time (our statistical indicators) do not assume their typical values.
Although these non-typical points (like {\em e.g.} unstable fixed points, periodic points) have null probability to be attained by the dynamics, their influence in a finite region around their location affects the convergence of statistical indicators via an exponentially small probability of deviations from the typical values. The rate function measures the intensity of such deviations.

To show a realization of this scenario, in Section \ref{sec-climate} we analyze experimental data coming from climate dynamics. In fact, our broader goal is to implement statistical tools to investigate and to interpret data coming from various physical situations, like climate dynamics, but also turbulence, neuroscience and biology.  Further applications of the methods developed in this paper will appear in forthcoming publications.

\section{Definitions and review of related literature}

Let $(M,\mu,T)$ be a dynamical system given by a map $T$ acting on a metric space $M$ with distance $d(.,.)$ and preserving a Borel probability measure $\mu$. If we denote by $B(z,r)$ the ball of radius $r$ centered at $z \in M$, we define the spectrum $D_q$ ($q \neq 1$) of the generalized dimensions of $\mu$ by the scaling relation of the $q$--correlation integral with respect to the radius $r$

\beq
 \Gamma_\mu(r,q) =  \int_M \mu(B(z,r))^{q-1} d\mu \sim_{r \to 0} r^{D_q(q-1)}.
\nuq{1}
For $q=1$ the above is replaced by:
\beq
\int_M \log(\mu(B(z,r))) d\mu \sim_{r \to 0} {D_1} \log r.
\nuq{2}
See \cite{grass} and \cite{gra2} for the introduction of the theory, \cite{pesjstat} for a formal rigorous definition of the previous scaling behavior and the books \cite{holger, PP, ba} for several applications to  non-linear systems.

The scaling Eq. (\ref{1}) can be made mathematically precise by the following limit that defines the real function $\tau$ of the index $q$ (employing liminf and limsup to define upper and lower quantities, when they differ):
\begin{equation}\label{d2}
\tau(q)=\lim_{r\rightarrow 0}\frac{\log \Gamma_\mu(r,q)}{\log r}.
\end{equation}
Generalized dimensions $D_q$ are obtained from the function $\tau(q)$ via the equation
\beq
\tau(q)=D_q(q-1)
\nuq{eq-tau}
when $q \neq 1$, and by l'Hopital rule when $q=1$.
It is well known that, for a large class of dynamical systems, the Legendre transform of  $\tau(q)$, namely
\begin{equation}\label{FA}
f(\alpha)=\min_q\{\alpha\  q-\tau(q)\},
\end{equation}
is the Hausdorff dimension of the  set of points $z\in M$ verifying:
\begin{equation}\label{ED}
\lim_{r\to0} \frac{\log\mu(B(z,r))}{\log r}=\alpha,
\end{equation}
provided the limit exists, see \cite{pesjstat, ba} and references therein. This limit is called the {\em local dimension} of the measure $\mu$ at the point $z.$

The so-called {\em exact dimensional} measures $\mu$ have a local dimension that is constant $\mu-a.e.$. This dimension coincides with the information dimension $D_1$ \cite{LSY}. Several dynamical systems with hyperbolic properties possess an invariant measure that is exact dimensional, whose information dimension can be expressed in terms of the Lyapunov exponents and of the metric entropy. When the function $f(\alpha)$ is not singular, the space $M$ can be parted into uncountably many subsets characterized by the same local dimension, which are of zero measure (except of course that of dimension $D_1$) but of positive Hausdorff dimension, yielding what is called a multifractal.

When the generalized dimensions $D_q$ vary with $q$, they imply deviations of the local dimensions defined in Eq. (\ref{ED}) from the expected value $D_1$.  To gauge the deviation of the observable $\frac{\log\mu(B(z,r))}{\log r}$ from its expected value, at the finite resolution $r>0$, one considers the quantity $\mu\left(\left\{ z \in M \mbox{ s.t. } \frac{\log\mu(B(z,r))}{\log r}\in I\right\}\right),$ where $I$ is any interval in $\mathbb{R},$ including or not the expected value $D_1$. It has been recently proven \cite{SJ} the interesting result that in the family of systems known as conformal repellers the previous deviations decrease exponentially when $r$ tends to zero, with a rate that is given in terms of the generalized dimensions:
\beq
\lim_{r\to0}\frac{1}{{\log r}} \log \mu\left(\left\{z \in M \mbox{ s.t. } \frac{\log\mu(B(z,r))}{\log r}\in I\right\}\right)=\inf_{s\in I}Q(s).
\nuq{5ab}
The {\em rate function} $Q(s)$ is determined again by $\tau(q)$ of Eq. (\ref{d2}):
\beq
Q(s)=\sup_{q\in \mathbb{R}}\{-qs+\tau(q+1)\}.
\nuq{5b}

In addition to the point $z$, let us now consider a second point $x \in M$, and let us denote by $\h_{B(z,r)}(x)$ the first hitting time of the point $x$ in the ball $B(z,r)$:
\beq
\ta=\min\{n > 0 \mbox{ s.t. } T^n(x)\in B(z,r)\}.
\nuq{eq-hit1}
A particular situation happens when the point $x$ belongs to the ball $B(z,r)$. In this case one calls $\ta$ the time of first return of $x$ into $B(z,r)$. It is convenient to define $\mu_{|B(z,r)}(\cdot)$, the restriction of $\mu$ to $B(z,r)$:
\[
\mu_{|B(z,r)}(A) = \frac{\mu(A)}{\mu(B(z,r))},
\]
where $A$ is any measurable set.
When the invariant measure $\mu$ is ergodic, it is well known that the first return time satisfies Kac's theorem \cite{kac}:
\begin{equation}\label{kacb}
 \mathbb{E}_{\mu_{|B(z,r)}}(\tb)=
 \int_{B(z,r)} \tb (x) d\mu_{|B(z,r)}(x) =
 \frac{1}{\mu(B(z,r))}.
\end{equation}
By using the previous result in association with the Ornstein and Weiss theorem \cite{OW}, it is possible to show that the first return time satisfies
\beq
\lim_{r \to 0} \frac{\log{\cal H}_{B(z,r)}(z)}{-\log r}=D_1,
\nuq{OW}
for $z$  chosen $\mu-a.e.,$ (see \cite{STV1, STV2}). Observe that  in the above equation we are considering the first return of the center of the ball into the ball itself, {\em i.e.} $x=z$.
The first return time enjoys exponential large deviations, namely it was proven in  \cite{SJ} that:
\beq
\mu\left(\left\{z \in M \mbox{ s.t. }\frac{\log\tb(z)}{-\log r}\in I\right\}\right)\sim r^{\inf_{s\in I}Q^*(s)}.
\nuq{5a}
The rate function $Q^*$ is slightly different from the $Q$ given above.  For its precise definition we refer again to \cite{SJ} Theorem 2.5,  where the large deviation property is proven\footnote{Actually, even for conformal repellers, the limit in Eq. (\ref{5ab}) must be replaced with $\liminf$ and $\limsup$ and the rate functions are complicated expressions involving $\tau(q)$.}.

The question has been asked whether a multifractal description of the first return time could be meaningful, by considering the set of points where the limit in Eq. (\ref{OW}) is different from the typical value $D_1$. Yet, in the case of conformal repellers it has been proven  \cite{SS1, SS2}, that {\em all} level sets with a value different from $D_1$ have the same Hausdorff dimension of the ambient space, see also \cite{olsen}. This is a further point in favor of using large deviations instead of the multifractal description for studying recurrence quantities.

Let us now return in full generality to hitting times, when the initial condition $x$ does not necessarily belong to a neighborhood of the final state $z$. For systems with super-polynomial decay of correlations  a result analogous to Eq. (\ref{OW}) holds:
\beq
\lim_{r \to 0} \frac{\log\ta}{-\log r}=D_1,
\nuq{ga}
for $x$ and $z$ chosen $\mu-a.e.$ \cite{sg}. The next question to ask is whether hitting times enjoy exponentially large deviations, that is, whether and under what conditions it holds true that
\begin{equation}\label{LDP}
\mu\times\mu\left(\left\{(x,z) \in M \times M \mbox{ s.t. } \frac{\log\tb(x)}{-\log r}\in I\right\}\right)\sim r^{\inf_{s\in I}\hat{Q}(s)},
\end{equation}
where the rate function $\hat{Q}$ presumably involves again the generalized dimensions.  Notice that we weighted the event with the product measure since there are two sources of {\em alea}, in the choice of the starting point $x$ and of the target point $z$. Such large deviation property has interesting physical consequences.
As anticipated above, we expect that the presence of points $x$ and $z$ giving limits different from $D_1$ in Eq. (\ref{ga})---which we interpret respectively as exceptional initial conditions ($x$) and rare target regions ($z$)---yields deviations in the limit of Eq. (\ref{ga}) for small $r$. These deviations go to zero exponentially fast, in a well-defined limit procedure, with a rate which is measurable and that can be linked to the intensity of the extreme events.

In the next section we prove such results for a specific class of dynamical systems, linking them to generalized dimensions as in \cite{CU}. We will then provide two techniques to compute generalized dimensions for positive and negative $q$, in both cases by using a recurrence approach. In particular, for positive $q$ we  use extreme value theory and as a by product we  obtain a new sets of {\em extremal indices} that we interpret in terms of the Lyapunov exponents and of the density of the invariant measure, thus extending previous results in \cite{D2} for $q=2.$

\section{Large deviations for the first hitting times}
\label{sec-largedev}
Extreme value theory (EVT) can be used to determine the probability that the system enters for the first time a small region of the phase space (rare event) after a certain amount of time \cite{book}.
Instead of looking directly to the probability of the first occurrence of  such rare events, one could ask whether the presence of those events influence the convergence of the indicators  towards their expected values. This can be achieved by looking at the deviations from typical values  and the rate of such deviations can be obtained using EVT.

We  now state and prove a general result on large deviations in the statistics of the first hitting time. The result relies on a set of assumptions that hold true for several dynamical systems possessing some sort of hyperbolicity and exponential decay of correlations. We therefore consider dynamical systems $(M, \mu, T)$ that verify the following assumptions:

\begin{itemize}
\item {\bf A-1: Exponential distribution of hitting times with error}. There is a constant $C>0$ such that for $\mu$-a.e. $z\in M$ and $t>0$ we have
\begin{equation}\label{E2}
\left|\mu\left(\left\{x \in M \mbox{ s.t. } \ta\ge \frac{t}{\mu(B(z,r))}\right\}\right)-e^{-t}\right|\le C \delta_r \max(t,1)\ e^{-t}
\end{equation}
where
\beq \delta_r=O\left(\mz|\log\mz| \right).
\nuq{eq-a1a}
In particular, for $t>\mz |\log\mz^C|$ we have:
\begin{equation}\label{RE1}
\mu\left(\left\{x \in M \mbox{ s.t. } \ta\ge \frac{t}{\mu(B(z,r))}\right\}\right)=\exp[-t(1+O(\delta_r)](1+O(\eta_r)),
\end{equation}
with $\eta_r=O(\mz)$, while for $t\le \mz |\log\mz^C|$ we have\footnote{In the proof of Proposition \ref{prop-1} we set the constant exponent $C$ to unity because its value is irrelevant for the proof.}
\begin{equation}\label{RE2}
\mu\left(\left\{x \in M \mbox{ s.t. } \ta\ge \frac{t}{\mu(B(z,r))}\right\}\right)\ge 1-\frac{t}{C}.
\end{equation}
Notice that the above implies that both $\eta_r$ and $\delta_r$ depend on the point $z \in M$.
\item {\bf A-2: Exact dimensionality.} The measure $\mu$ verifies Eq. (\ref{ED}) and the limit value is $\alpha = D_1$ for $\mu$-a.e. $z$.
    \item{\bf A-3: Uniform bound for the local measure.}
        There exists $d^*>0$ such that for all $z\in M$ we have
        \begin{equation}\label{UB}
        \mu(B(z,r))\le r^{d^*}.
        \end{equation}
\item {\bf A-4: Existence and analyticity of the correlation integrals.} For all $q\in \mathbb{R}$ the limit defining $\tau(q)$, Eq. (\ref{d2}), exists.
Moreover the function $\tau(q)$ is real
analytic for all $q\in \mathbb{R}$, $\tau(0)=-D_H,$ $\tau(1)=0,$ $\tau'(q)\ge 0$
and $\tau''(q)\le 0.$ In particular $\tau''(q)<0$ if and only if $\mu$ is not a measure of maximal entropy.
\end{itemize}

We derive the first assumption from Keller's paper \cite{GK}, where condition (\ref{E2}) is proven for the so-called REPFO maps\footnote{REPFO stands for {\em Rare events Perron-Frobenius operators}, since the conditions are given in terms of the spectral properties of the transfer operator.},
which include a large class of mixing systems with exponential decay of correlations. Keller's derivation of  (\ref{E2}) contains fine estimations of the quantity $\mu\left\{ \ta\ge \frac{t}{\mu(B(z,r))}\right\}$ for $t\sim \mz |\log \mz|$, which we adopted in formulae (\ref{RE1}) and (\ref{RE2}). Moreover,  Keller's conditions are even more general, since they hold for any point $z$, provided the rescaled time $\frac{t}{\mu(B(z,r))}$ is modified as $\frac{t}{\kappa_r\mu(B(z,r))}$, where the factor $\kappa_r$ depends on the target point $z$ and converges to the {\em extremal index} at $z$ when $r$ goes to zero. For the kind of ``nice'' expanding systems we are considering, including the REPFO ones, this extremal index is equal to one almost everywhere, see also \cite{book} for an extensive discussion, and this explains our
choice in eq. (\ref{RE2}). Assumption A-4 is a strong one and it has been proven to hold for conformal mixing repellers endowed with Gibbs measures in \cite{pesjstat}. This condition has also been assumed in \cite{SJ}, to prove the large deviation result (\ref{5a}). For the same class of conformal repellers Assumption A-3 holds too, see Lemma 3.15 in \cite{SJ}. As remarked in the Introduction, we consider these {\em ideal} systems interesting models to establish rigorous results that might also hold in more general settings.
\begin{proposition}
\label{prop-1}
Let us suppose that the dynamical system $(M,\mu,T)$  verifies Assumptions (A-1)-(A-4). Then:
\begin{itemize}
\item For $q>0$,
\beq
 \lim_{r\rightarrow 0}\frac1{\log r}\log \int_M \int_M\ta^{q-1} d\mu(x) d\mu(z)=\lim_{r\rightarrow 0}\frac1{\log r}\log \int_M \mz^{1-q} d\mu(z).
\nuq{eq-prop1a}
\item For $q\le 0$,
\beq
\lim_{r\rightarrow 0}\frac{1}{\log r}\log \int_M \int_M \ta^{q-1} d\mu(z)d\mu(x)=\lim_{r\rightarrow 0}\frac{1}{\log r}\log \int_M \mz d\mu(z).
\nuq{eq-prop1b}
\end{itemize}
\end{proposition}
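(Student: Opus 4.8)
The plan is to estimate the double integral $\int_M\int_M \ta^{q-1}\,d\mu(x)\,d\mu(z)$ by first integrating over $x$ for fixed $z$, using Assumption A-1 to control the distribution of $\ta$, and then integrating over $z$, where the key quantity becomes a power of $\mz$. For the $x$-integral at fixed $z$, I would write $\int_M \ta^{q-1}\,d\mu(x)$ in terms of the tail function $\Phi_z(s):=\mu(\{x:\ta\ge s\})$ via the layer-cake / Abel-summation identity $\int \ta^{q-1}\,d\mu(x)=\sum_{n\ge 1} n^{q-1}\mu(\ta=n)$, or equivalently an integral $\int_0^\infty$ against $\Phi_z$. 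The natural change of variables is $s=t/\mz$, so that $\ta^{q-1}\approx (t/\mz)^{q-1}$ and A-1 tells us $\Phi_z(t/\mz)\approx e^{-t}$ up to the multiplicative error $(1+O(\delta_r))$ on the exponent and $(1+O(\eta_r))$ outside, for $t$ not too small, while for $t\le \mz|\log\mz^C|$ we only have the crude lower bound $\Phi_z\ge 1-t/C$ from \eqref{RE2}. Heuristically this gives
\[
\int_M \ta^{q-1}\,d\mu(x)\ \asymp\ \mz^{1-q}\int_0^\infty t^{q-1}e^{-t}\,dt\ =\ \Gamma(q)\,\mz^{1-q},
\]
for $q>0$, and then $\int_M \mz^{1-q}\,d\mu(z)$ is exactly the quantity on the right-hand side of \eqref{eq-prop1a}; taking $\frac1{\log r}\log(\cdot)$ kills the constant $\Gamma(q)$ and the multiplicative errors, since $\delta_r,\eta_r\to 0$ and A-3 ensures $\mz\le r^{d^*}$ so that $|\log\mz|=O(|\log r|)$ uniformly.

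The step I expect to be the main obstacle is making the above heuristic rigorous uniformly in $z$, in particular handling the small-$t$ regime $t\le \mz|\log\mz^C|$ and the fact that $\delta_r,\eta_r$ depend on $z$. For $q>0$ the integrand $t^{q-1}e^{-t}$ is integrable near $0$, so the small-$t$ contribution to $\int_0^\infty t^{q-1}e^{-t}\,dt$ is negligible; one must check that replacing the true tail by its crude lower bound there does not destroy the estimate — a lower bound on $\ta^{q-1}\,d\mu(x)$ restricted to $\{\ta\ge s_0\}$ with $s_0=|\log\mz^C|$ suffices, combined with an upper bound using $\Phi_z\le 1$ on $t\le s_0\mz$ which contributes at most $O((\mz|\log\mz|)^{q})\cdot\mz^{-q}$-type terms that are sub-leading after taking logarithms. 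The $z$-dependence of the error terms is dealt with by noting that A-1 gives $\delta_r=O(\mz|\log\mz|)$ and hence, using A-3, $\delta_r\le C'r^{d^*}|\log r|$ uniformly in $z$; thus $e^{\pm O(\delta_r)\cdot t}$ integrated against $t^{q-1}e^{-t}$ produces a factor $1+o(1)$ uniformly. After all these uniform bounds one obtains
\[
c_q\,\mz^{1-q}\ \le\ \int_M \ta^{q-1}\,d\mu(x)\ \le\ C_q\,\mz^{1-q}\bigl(1+o(1)\bigr)
\]
with $c_q,C_q$ independent of $r,z$, integrate over $z$, take logarithms, divide by $\log r<0$ and let $r\to 0$; the two-sided bound forces the limit to coincide with that of $\frac1{\log r}\log\int_M \mz^{1-q}\,d\mu(z)$ (and A-4 guarantees this latter limit exists and equals $\tau(2-q)$, though existence is not strictly needed for the stated equality of limits, read as liminf and limsup).

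For $q\le 0$ the roles reverse: now $\ta^{q-1}$ is a \emph{decreasing} function of $\ta$, heavily weighting the \emph{small} values $\ta=1,2,\dots$, so the dominant contribution is exactly the regime $t\le \mz|\log\mz^C|$ where \eqref{RE2} applies and $\Phi_z\approx 1$. The leading term is then governed by $\mu(\ta=1)=\mu(B(z,r))$ itself — more precisely $\int_M\ta^{q-1}\,d\mu(x)\ge \mu(\ta=1)=\mz$ trivially, while the upper bound comes from summing $\sum_n n^{q-1}\mu(\ta=n)$ and controlling the tail $n>1/\mz$ using the exponential decay from A-1 (which contributes a term exponentially small in $|\log\mz|$, hence a positive power of $r$, absorbed after dividing by $\log r$) and the intermediate range $1\le n\le 1/\mz$ using $\mu(\ta\ge n)\le 1$ together with $\sum_{n\ge 1} n^{q-1}<\infty$ for $q<0$ (and a logarithmic factor $\sum_{n\le 1/\mz} n^{-1}=O(|\log\mz|)$ for the borderline $q=0$, which again disappears after $\frac1{\log r}\log(\cdot)$). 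This yields $\mz\le \int_M\ta^{q-1}\,d\mu(x)\le C\,\mz\cdot|\log\mz|^{\max(1,\cdot)}$ uniformly in $z$, whence, integrating over $z$ and passing to the logarithmic limit, the right-hand side of \eqref{eq-prop1b} is recovered; here the $|\log r|$ corrections are killed precisely because we divide by $\log r$. The borderline index $q=0$ and the sharpness of the constant in \eqref{RE2} are the delicate points, but since only the exponential scaling rate is claimed, polynomial-in-$|\log r|$ slack is harmless throughout.
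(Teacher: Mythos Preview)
Your approach for $q>0$ is essentially the paper's: express the inner integral via a layer-cake formula, rescale time by $A=\mz$, and use the exponential tail estimates from A-1 to sandwich $\int_M\ta^{q-1}\,d\mu(x)$ between constants times $\mz^{1-q}$, with A-3 providing the uniformity in $z$ needed before integrating over $z$. The paper splits this into the sub-cases $q>1$ and $0<q<1$ (using the tail form $\int t^{q-2}\mu(\ta>t/\mz)\,dt$ for the former and the distribution form $\int t^{q-2}\mu(\ta<t/\mz)\,dt$ for the latter), but the substance matches your sketch.

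For $q\le 0$ there is a genuine gap in your upper bound. You claim to control the range $1\le n\le 1/\mz$ ``using $\mu(\ta\ge n)\le 1$ together with $\sum_{n\ge 1}n^{q-1}<\infty$'', and then assert $\int_M\ta^{q-1}\,d\mu(x)\le C\,\mz\,|\log\mz|^{\cdots}$. But the trivial bound $\mu(\ta=n)\le 1$ only gives $\sum_{n}n^{q-1}\mu(\ta=n)\le \zeta(1-q)=O(1)$, not $O(\mz)$; after integrating in $z$ and dividing by $\log r$ this yields only that the limit is $\ge 0$, not $\ge D_2$ as required by (\ref{eq-prop1b}). The ingredient you are missing is that $\{\ta=n\}\subseteq T^{-n}B(z,r)$, so by $T$-invariance of $\mu$ one has $\mu(\ta=n)\le \mz$ for \emph{every} $n$. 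This immediately yields $\sum_{n\ge 1} n^{q-1}\mu(\ta=n)\le \zeta(1-q)\,\mz$ for $q<0$; at $q=0$ one truncates the harmonic sum at $N=\lfloor 1/\mz\rfloor$ to get $\mz\log(1/\mz)$ plus a tail $\sum_{n>N}n^{-1}\mu(\ta=n)\le N^{-1}\mu(\ta>N)\le \mz$, giving the claimed $O(\mz|\log\mz|)$ without even invoking A-1. With this one-line fix your $q\le 0$ argument is actually shorter than the paper's, which instead works with the integral $\int_{\mz}^\infty t^{q-2}\mu(\ta< t/\mz)\,dt$, partitions the domain into $[\mz,\mz|\log\mz|]$, $[\mz|\log\mz|,1]$ and $[1,\infty)$, and on the first two pieces extracts the linear behaviour $\mu(\ta< t/\mz)\lesssim t$ from the refined bounds (\ref{RE1})--(\ref{RE2}) of A-1, obtaining $\int t^{q-1}\,dt\asymp \mz^{q}$ and hence $I(q-1,z,r)\asymp\mz$ after multiplying by $(1-q)\mz^{1-q}$; the lower bound there also rests on $\mu(\tb\le 1)=\mz$, which is exactly your observation $\mu(\ta=1)=\mz$.
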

\begin{proof}
We follow the scheme of the proof of Theorem 3.1 in \cite{CU}, by translating its argument from cylinders to balls. We use a simple lemma whose proof is a standard exercise:
\begin{lemma}
Consider a function $f$ from $M$ to the integer numbers larger than, or equal to one: $f : M \rightarrow \mathbb{N}_+$. Let $0<A\leq 1$ and define
\beq
I(q) = \int_M f^q (x) d \mu(x).
\nuq{eq-add}
Then, when $q>0$,
\beq
I(q) = 1 - \lim_{t \to \infty} t^q \mu\left(\left\{x \in M \mbox{ s.t. } f(x) > t \right\}\right) + \frac{q}{A^q} \int_A^\infty t^{q-1} \mu\left(\left\{x \in M \mbox{ s.t. } f(x) > \frac{t}{A} \right\}\right) dt.
\nuq{eq-lem1}
On the other hand, when $q<0$,
\beq
I(q) = - \frac{q}{A^{q}} \int_A^\infty t^{q-1} \mu\left(\left\{x \in M \mbox{ s.t. } f(x) < \frac{t}{A} \right\}\right) dt.
\nuq{eq-lem1b}
\end{lemma}
To prove Proposition \ref{prop-1} we need to apply the previous lemma to $f(x) = \ta$ and consider the integral
\beq
I(q-1,z,r) =  \int_M\ta^{q-1} d\mu(x),
\nuq{eq-hiti2}
which is of the form studied in the Lemma.
There are three cases to consider.

\begin{enumerate}
\item Case 1: $q>1$, that is, $q-1>0$ that allows to apply formula (\ref{eq-lem1}).
Because of assumption {\bf A-1}, Eq. (\ref{E2}), the limit in Eq. (\ref{eq-lem1}) is null. Moreover, using again Eq. (\ref{E2}) for $t>1$,
\beq
e^{-t}(1-C\delta_rt )\le \mu\left(\left\{x \in M \mbox{ s.t. } \ta\ge\frac{t}{\mz}\right\}\right)  \le e^{-t}(1+C\delta_rt),
\nuq{eq-dif1}
we can bound the integrand at r.h.s. in Eq. (\ref{eq-lem1}):
\[
I(q-1,z,r) \geq
 \frac{q-1}{\mu(B(z,r))^{q-1}}
\left[\int_{1}^\infty t^{q-2}  e^{-t} dt - C\delta_r \int_{0}^\infty t^{q-1}  e^{-t} dt \right].
\]
The last two integrals are convergent. Moreover, observe that Eq. (\ref{eq-a1a}) and {\bf A-3} imply that $\delta_r$ is uniformly bounded from above. Taking $r$ small enough the term into brackets becomes positive and larger than a quantity independent of $r$.  A similar reasoning yields an upper bound for $I(q-1,z,r)$ of the same form. Since the double integral $\int_M \int_M\ta^{q-1} d\mu(x) d\mu(z)$ is the single integral of $I(q-1,z,r)$ with respect to $d\mu(z)$, the equality (\ref{eq-prop1a}) follows.

\item Case 2: $0<q<1$. Since $q-1 < 0$ we employ Eq. (\ref{eq-lem1b}):
\beq
I(q-1,z,r) =  \frac{1-q}{\mu(B(z,r))^{q-1}} \int_{\mu(B(z,r))}^\infty
t^{q-2}
\mu\left(\left\{x \in M \mbox{ s.t. } \ta < \frac{t}{\mz}\right\}\right) dt.
\nuq{eq-qmin1}
We use again Eq. (\ref{eq-dif1}) to get
\[
I(q-1,z,r) \geq \frac{1-q}{\mu(B(z,r))^{q-1}} \int_{1}^\infty
t^{q-2} (1-e^{-t} (1+C \delta_r t)) dt =   \frac{1-q}{\mu(B(z,r))^{q-1}} S(z,r).
\]
In the above we have put
\[
S(z,r) = \int_{1}^\infty
t^{q-2} (1-e^{-t}) dt +  C \delta_r \int_{1}^\infty t^{q-1} e^{-t} dt,
\]
The term $S(z,r)$ is again composed of a constant (the first convergent integral) and of a vanishing quantity (when $r \to 0$), which puts us in the position of using the previous technique to obtain a first inequality between the two terms of Eq. (\ref{eq-prop1a}).

To prove the reverse inequality we begin by observing that for $t<1$ condition {\bf A-1} simply becomes $|\mu\left(\left\{x \in M \mbox{ s.t. } \ta\ge \frac{t}{\mu(B(z,r))}\right\}\right)-e^{-t}|\le O(\delta_r);$ then
we start from Eq. (\ref{eq-qmin1}) and we part the integral in two, the first from $\mz$ to one and the second from one to infinity:
\beq
I(q-1,z,r) = \frac{1-q}{\mu(B(z,r))^{q-1}} (J_1 + J_2).
\nuq{eq-cacchio3}
We use again Eq. (\ref{eq-dif1}) to get:
\beq
   J_1 = \int_{\mz}^1 t^{q-2}
\mu\left(\left\{x \in M \mbox{ s.t. } \ta < \frac{t}{\mz}\right\}\right) dt \leq
\nuq{eq-j1a}
\beq
\int_{\mz}^1 t^{q-2} (1-e^{-t}+O(\delta_r )) dt \leq \int_{0}^1 t^{q-2} (1-e^{-t}) dt + O(\delta_r )\int_{\mz}^1  t^{q-2}  dt.
\nuq{eq-j1a2}
The first integral in the above is a positive constant. Let us consider the second term:
\[
O(\delta_r) \int_{\mz}^1  t^{q-2}  dt
\leq \frac{O(\delta_r)}{1-q}  \mz^{q-1} = O\left(\mz^q |\log\mz \right)|,
\]
where the last equality follows from $\delta_r=O\left(\mz |\log\mz \right)|$, Eq. (\ref{eq-a1a}). Therefore, when $r$ tends to zero, this term vanishes.

The case of $J_2$ is easier:
\beq
   J_2 = \int_{1}^{\infty} t^{q-2}
\mu\left(\left\{x \in M \mbox{ s.t. } \ta < \frac{t}{\mz}\right\}\right) dt \leq
\int_{1}^{\infty} t^{q-2} dt = C_2,
\nuq{eq-j2a}
which is again bounded by a constant.

\color{black}
\item Case 3: $q<0$. We use again Eq. (\ref{eq-cacchio3}), with $J_1$ and $J_2$ defined in Eqs. (\ref{eq-j1a}) and (\ref{eq-j2a}), respectively. For the latter integral the inequality (\ref{eq-j2a}) still holds, with a different constant $C_2$. To deal with the integral $J_1$ we further split its domain into the intervals $[{\mz},{\mz |\log\mz|}]$ and $[{\mz |\log\mz|},1]$, thereby defining the integrals $J_{1,1}$ and $J_{1,2}$, respectively.

At this point we use Eq. (\ref{RE2}) to estimate  from above the integrand of $J_{1,1}$ and Eq. (\ref{RE1}) to do the same for $J_{1,2}$. Putting the two estimates together, we obtain
 \begin{equation}
 J_1 \le C_3 \int_{\mz}^{1}t^{q-1}dt\le \frac{C_3}{|q|}
\left[\mz^{q}- 1 \right]
\le   \frac{C_3}{|q|}
\mz^{q},
 \end{equation}
where $C_3$ is another constant independent of $r$ and $z$.

To get a lower bound we write
\[
J_1 = \int_{\mz}^{1}t^{q-2}\mu\left(\tb<\frac{t}{\mz}\right)dt\ge
 \int_{\mz}^{1}t^{q-2}\mu(\tb\le1)\; dt
\]
\[
= \mz\int_{\mz}^{1}t^{q-2}dt = |q-1|^{-1}\mz^{q}\left[1-\mz^{|q-1|}\right]
\]
\[
\ge |q-1|^{-1}\mz^{q}\left[1-r^{d^*|q-1|}\right].
\]
In the last step we have used Eq. (\ref{UB}), so that the term in the square brackets is positive and uniformly bounded for $r$ small enough.
\end{enumerate}
By collecting all the
preceding estimates, we get the desired result for all $q$. \end{proof}

We are now ready to state our result on large deviations of the first
hitting time. We first recall that the  {\em free energy function} $R(q), q\in \mathbb{R}$ associated with the process $\frac{\log\ta}{-\log r},$  is given by
\beq
R(q)=\lim_{r\rightarrow 0}\frac{1}{-\log r}\log \int_M\int_M  \ta^{q} d\mu(z)d\mu(x),
\nuq{9}
provided the limit exists. If $R(q)$ is $C^2$ and strictly convex on $\mathbb{R},$
its Legendre transform is called
the rate function $\hat{Q}$ and  satisfies $\hat{Q}(s)=\sup_q\{qs-R(q)\}$; we refer to \cite{SJ}
for a brief review of large deviations, see also \cite{EE}. Our previous Proposition shows that the free energy for the first hitting time verifies $R(q)=-\tau(1-q)$ when $q>-1.$ In this range of values of $q$, $R'(q)=\tau'(1-q)>0$ (by Assumption 2) and therefore the
supremum for the rate function $\hat{Q}$ is attained for positive $s$ by
a value of $q$ satisfying $R'(q)=s$. On the other hand, Assumptions {\bf A-2} and {\bf A-4} immediately imply that for positive $s$, $\hat{Q}(s)$ is a smooth convex
function with the minimum at $D_1.$ Since the free energy is not smooth everywhere, being  not differentiable in $q=-1,$ we cannot use the standard G\"artner-Ellis theorem, but a local version of it, as it is reported in Lemma XIII.2 in \cite{HH}. Let us put $\Delta=D_1+1,$ the above proves the crucial
\begin{proposition}
\label{prop-two}
Let us suppose that $\mu$ is not a measure of maximal dimension, which
ensures that $R(q)$ is strictly convex. Then for all $s\in (0, R(\Delta)/\Delta)$ we have
\beq
\lim_{r\rightarrow 0}\frac{1}{\log r}\log (\mu \times \mu)\left\{\frac{\log\ta}{-\log r}>D_1+s\right\}=\hat{Q}(D_1+s),
\nuq{largedev}
where $\hat{Q}(s)=\sup_q\{qs+\tau(1-q)\}.$
\end{proposition}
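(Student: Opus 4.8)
The plan is to read Eq.~(\ref{largedev}) as a one-sided large deviation estimate for the observable $X_r(x,z):=\dfrac{\log\ta}{-\log r}$ on the product space $(M\times M,\mu\times\mu)$, with speed $(-\log r)^{-1}\to0^{+}$, and to extract it from the free energy function $R$ of Eq.~(\ref{9}) by a G\"artner--Ellis type argument. The first step is to restate Proposition~\ref{prop-1} in terms of $R$: substituting $q\mapsto q+1$ and noting that $\int_M\mz^{1-q}\,d\mu(z)=\Gamma_\mu(r,2-q)$, whose logarithmic scaling limit is $\tau(2-q)$ by Assumption~{\bf A-4} and Eq.~(\ref{d2}), one gets that the limit in Eq.~(\ref{9}) exists, with
\[
R(q)=-\tau(1-q)\ \text{ for }q>-1,\qquad R(q)=-\tau(2)\ \text{ for }q\le-1,
\]
the second identity being the $q\le0$ branch of Proposition~\ref{prop-1}; the two formulas match at $q=-1$, so $R$ is continuous. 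Since $\tau(1)=0$, one has $R(0)=0$ and $R'(0)=\tau'(1)=D_1$, the typical value of $X_r$.

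Second, I would record the facts about $R$ that drive the argument. On $(-1,\infty)$ Assumption~{\bf A-4} makes $R$ real-analytic with $R''(q)=-\tau''(1-q)$, which the hypothesis that $\mu$ is not of maximal dimension (hence $\tau''<0$, as noted just before the statement) renders strictly positive; so $R$ is strictly convex there, while on $(-\infty,-1]$ it is constant. Thus $R$ is convex on $\mathbb{R}$ but not differentiable at $q=-1$: this is exactly why the classical G\"artner--Ellis theorem cannot be used and a local version (Lemma~XIII.2 of~\cite{HH}) is invoked instead. For a deviation to $D_1+s$ with $s>0$, set $g(q)=q(D_1+s)-R(q)$: it is concave, with $g'(0)=s>0$ and --- using $R(0)=0$ --- $g(\Delta)=\Delta(D_1+s)-R(\Delta)<0$ precisely when $s<R(\Delta)/\Delta$; concavity then forces the maximum of $g$ at a unique $q^{*}=q^{*}(s)\in(0,\Delta)\subset(-1,\infty)$ with $R'(q^{*})=D_1+s$, i.e.\ $\tau'(1-q^{*})=D_1+s$. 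Monotonicity of $\tau'$ makes $q\mapsto q(D_1+s)+\tau(1-q)$ unimodal, so the same $q^{*}$ realizes $\hat Q(D_1+s)=\sup_q\{q(D_1+s)+\tau(1-q)\}=q^{*}(D_1+s)-R(q^{*})$, and by strict convexity $\hat Q(D_1+s)>\hat Q(D_1)=0$. Hence the restriction $s\in(0,R(\Delta)/\Delta)$ is what keeps the exposed parameter $q^{*}$ inside the analytic, strictly convex part of the domain of $R$, safely away from the kink at $q=-1$.

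Third, the two matching bounds for $P_r:=(\mu\times\mu)\{X_r>D_1+s\}$. For the upper bound on $P_r$ I would use the exponential Chebyshev inequality with the admissible parameter $q^{*}>0$: since $\ta\ge1$, $\{X_r>D_1+s\}=\{\ta^{q^{*}}>r^{-q^{*}(D_1+s)}\}$, so $P_r\le r^{q^{*}(D_1+s)}\int_M\int_M\ta^{q^{*}}\,d\mu(z)\,d\mu(x)$; dividing by $\log r<0$, letting $r\to0$ and using Eq.~(\ref{9}), $\liminf_{r\to0}\frac1{\log r}\log P_r\ge q^{*}(D_1+s)-R(q^{*})=\hat Q(D_1+s)$. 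For the reverse inequality I would tilt: form the probability measure $d\nu_r=\ta^{q^{*}}\bigl(\int_M\int_M\ta^{q^{*}}\,d\mu(z)\,d\mu(x)\bigr)^{-1}\,d(\mu\times\mu)$, under which --- because $R$ is twice differentiable at $q^{*}$, so $X_r$ has controlled fluctuations --- $X_r$ concentrates near $R'(q^{*})=D_1+s$; restricting to a thin slab $\{D_1+s<X_r<D_1+s+\delta\}\subset\{X_r>D_1+s\}$, bounding the Radon--Nikodym density from below there, and letting $\delta\to0$ yields $\limsup_{r\to0}\frac1{\log r}\log P_r\le\hat Q(D_1+s)$. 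Equivalently, one checks the hypotheses of the local G\"artner--Ellis lemma --- existence of $R$, analyticity and strict convexity near $q^{*}\in(0,\Delta)$ --- and quotes it. The reduction to the free energy and the Chebyshev bound are routine; the delicate step, and the reason both for restricting $s$ and for using a \emph{local} G\"artner--Ellis theorem, is the lower bound on $P_r$, whose change-of-measure must be performed with an exposed parameter $q^{*}$ lying strictly inside the smooth, strictly convex region $(0,\Delta)$ of $R$, away from its singularity at $q=-1$.
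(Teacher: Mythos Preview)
Your approach is exactly the paper's: recast Proposition~\ref{prop-1} as the computation of the free energy $R(q)=-\tau(1-q)$ on $q>-1$, observe the kink at $q=-1$ that blocks the global G\"artner--Ellis theorem, and invoke the local version (Lemma~XIII.2 of~\cite{HH}) instead; the paper's own argument is the short paragraph immediately preceding the statement, and you have supplied more detail (the Chebyshev upper bound and the tilting lower bound) than it does. One arithmetic slip: $g(\Delta)=\Delta(D_1+s)-R(\Delta)<0$ is equivalent to $s<R(\Delta)/\Delta-D_1$, not to $s<R(\Delta)/\Delta$ as you write, so your chain does not literally recover the interval in the proposition; the underlying idea --- that the range of $s$ is what keeps the exposed parameter $q^*$ inside the analytic, strictly convex region of $R$, away from the singularity at $q=-1$ --- is nevertheless the one the paper intends.
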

\begin{remark}
It is interesting to observe that the Legendre-Fenchel transform of the free energy  function $R(q)$ introduced above allows us to get the rate functions of the large deviations of different processes, namely:
\begin{itemize}
\item $R(q)$ gives the rate function $Q(s)$ of the information dimension,  see (\ref{5ab}).
\item $R(-q)$ gives the rate function $\hat{Q}(s)$ of the first hitting time, see (\ref{largedev}).
\item $R(q-1)$ gives the function $f(\alpha)$ expressing the Hausdorff dimension of the level sets with local dimension $\alpha$, see(\ref{FA}).
\end{itemize}
We therefore consider $R(q)$ an important global tool to analyze and describe the geometric and recursive properties of dynamically invariant measures.
\end{remark}

\section{Numerical determination of generalized dimensions}
The numerical determination of generalized dimensions is a principal concern when experimental data are to be examined, or theoretical hypotheses need to be tested on model cases. Many techniques have been proposed \cite{remo,rolf,rie,peinke} especially to deal with the case of negative dimensions ({\em i.e.} those corresponding to a negative value of $q$) that call in cause rarified regions of the invariant measure. Via Kac's theorem, these latter are related to large return times, hence rare events. For this reason a return time approach seems particularly suited to treat this case.

\subsection{Hitting time integral}
\label{sub-hit}
In this section we follow this approach, based upon Proposition \ref{prop-1}.
We assume that the data at our disposal are finite trajectories of the dynamical system $(M,\mu,T)$, which we label as $x_j = T^j (x_0)$, where the point $x_0$ is to be chosen on the attractor of the dynamical system. We set as reference technique the evaluation of the correlation integral in Eq. (\ref{1}) via a Birkhoff summation. This is effected by first finding the Birkhoff estimate of the measure of a ball
$ \mu(B(z_0,r))
$
via
\begin{equation}\label{kac2f}
J_N(z_0,r) = \frac{1}{N}\sum_{j=0}^{N-1} \chi_{B(z_0,r)}(x_j) \simeq  \mu(B(z_0,r)).
\end{equation}
Here and in the following $\chi_A$ is the indicator function of the set $A$.
The above quantity is then raised to the power $q-1$, and a second average with respect to the point $z_0$ is performed:
\begin{equation}\label{kac2g}
\frac{1}{N'}\sum_{l=0}^{N'-1} [J_N(T^l(z_0),r)]^{q-1} \simeq \Gamma_\mu(r,q).
\end{equation}
While in principle ($x_0$, $N$) and ($z_0$, $N'$) can be different, one can take advantage of the choice $z_0=x_0$, so to use a single trajectory for the computation. We elect not to do so, for reasons that we will explain later. A complete discussion of the method of correlation integrals is presented in \cite{holger}.

Once the correlation integral has been estimated (remark that the above Eqs. (\ref{kac2f}) and (\ref{kac2g}) are not scaling relations, but estimates that can be made arbitrarily precise), the problem remains of finding the scaling exponents implied in Eq. (\ref{1}) or equally the function $\tau(q)$.
Two main ways exist to do this from data at finite resolution $r$. The first is to employ extrapolation techniques of the ratio $\log \Gamma_\mu(r,q) / \log r$ computed at a number of values $r$, such as Levin's algorithm or similar \cite{roberto}. This is particularly useful when the measure has a hierarchical structure. The second, more conventional, is to try to find a linear least square fit of the log of the correlation integral with respect to the log of $r$. It is immediate to see that this is a sort of l'Hopital's rule to find the limit for $r$ tending to zero in Eq. (\ref{d2}).

In the present context, Proposition \ref{prop-1} implies that, for $q \leq 2$, the function $\tau(q)$ can be equally seen as the limit of the ratio in the left hand side of Eq.  (\ref{eq-prop1a}), after the substitution $q \to 2-q$. It requires the computation of the hitting double-integral
\beq
\Upsilon_\mu(q,r) = \int_M \int_M\ta^{1-q} d\mu(x) d\mu(z).
\nuq{eq-hiti1}
A Birkhoff estimate of this quantity can be obtained as follows: first consider the inner integral in the above equation (it was defined as $I(1-q,z,r)$ in Eq. (\ref{eq-hiti2}) of the previous section). This can be estimated as
\beq
 I_N(1-q,z_0,r) = \frac{1}{N} \sum_{j=0}^{N-1} \mathcal{H}_{B(z_0,r)}^{1-q}(x_j)\simeq  I(1-q,z_0,r).
\nuq{eq-hiti3}
In practice, it is convenient to fix $N$ (and to stop the evaluation of the motion) as soon as the trajectory of $x_0$ has entered the ball $B(z_0,r)$ $H$ times. In so doing, $N$ becomes a function of $H,$ $z_0$  and $x_0$. This can be done also when evaluating the conventional correlation integral, Eqs. (\ref{kac2f}) and (\ref{kac2g}). Next, we estimate the outer integral in (\ref{eq-hiti1}), again by a Birkhoff summation
\beq
 \frac{1}{N'} \sum_{l=0}^{N'-1} I_N(1-q,T^l(z_0),r)\simeq \Upsilon_\mu(q,r).
\nuq{eq-hiti4}
This procedure has the advantage that {\em the same set of data} can be used to determine an approximation to both the correlation integral $\Gamma_\mu(r,q)$ and the hitting integral $\Upsilon_\mu(q,r)$, so that the two methods can be compared fairly.

As first example of this comparison we choose the Arnol'd cat map on the two--torus \cite{cat}, a primary example of chaotic dynamical system, with the absolutely continuous invariant measure $\mu$ given by the Lebesgue uniform measure on this manifold, so that all generalized dimensions of the measure are equal to two. In Figure \ref{fig-1}, left panel, we plot the numerically estimated integrals $\Gamma_\mu(r,q)$ and $\Upsilon_\mu(q,r)$ versus $r$ in double logarithmic scale, for a selected set of values of $q$ ranging from $q=-1$ to $q=2$ and of $r$ ranging from $r=10^{-3}$ to $r=10^{-1}$. The (trivially constant) data for $q=1$ separate the integrals $\Gamma_\mu(r,q)$, $\Upsilon_\mu(r,q)$ that grow from those that diminish when $r$ tends to zero. The almost linear shape of the curves confirms the scaling in Eq. (\ref{1}) and a linear fit as described above provides an estimate of $\tau(q)$ and hence of $D_q$. Yet, a finer analysis reveals that the asymptotic behavior is not yet achieved at finite $r$.  In fact, in the right panel the results obtained using the slope of each linear interpolation between successive values of $r$ in the figure are displayed: since values of $r$ are equally spaced in logarithmic scale, we define
\beq
  \sigma_q(r) = \frac{1}{q-1} \frac{ \log \Upsilon_\mu(q,\rho r) - \log \Upsilon_\mu(q,r)}
    {\log(\rho r) - \log {r}},
    \nuq{eq-hiti5}
with $\rho < 1$. The values obtained are not constant: the lowest set of data, in particular, is related to $q=2$, the highest value for which generalized dimensions can be obtained in this way. Its value for $r = 1.77 \; 10^{-3}$ is still far from the theoretical value $D_2=2$, even if $\sigma_q(r)$ is an acceleration procedure of the limit in Eq. (\ref{d2}). Nonetheless, a further extrapolation can be performed. Typically, convergence in these estimates is rather slow, in the sense that a behavior of the kind
 \beq
  \sigma_q(r) = D_q + B  \log (r)
  \nuq{eq-hiti6}
holds. Therefore, using $D_q$ and $B$ as fitting parameters of the experimental data, a better estimate of $D_q$ can be obtained. The continuous line in the right panel of Figure \ref{fig-1} plots such approximation. The obtained value of $D_2$ is correct to three digits.
Finally, for comparison, the data obtained by using $\Gamma$ {\em in lieu} of $\Upsilon$---in other words, the conventional correlation integral---are also reported, shifted upwards by a small quantity for clarity. Recall that they have been computed on the same raw data (trajectories) than the former. They show a reduced precision for small values of $r$, in correspondence with the {\em smallest} value in the range plotted in figure, $q=-1$.

This instability is shown in Figure \ref{fig-sigma2c}, which reports the same data of the right panel of Figure \ref{fig-1}, for $q=-1$. In the same figure the analogue data obtained by using the {\em first--return time} integral \cite{noiprl,iojstat}
\beq
\Gamma_\tau(q,r) = \int_M \mathcal{H}_{B(x,r)}^{1-q}(x) \; d\mu(x)
\nuq{eq-hiti10}
are also reported. They are evaluated using a small portion of the data used in the other two cases: in fact, in this case only the {\em first} return is concerned, rather than the $H$ hits required by the previous techniques. Despite the fact that a rigorous proof of this procedure is lacking (see nonetheless \cite{iojstat}) data are consistent with the expected result.

\begin{figure}[h!]
    \centering
    \begin{subfigure}[t]{0.5\textwidth}
        \centering
\includegraphics[height=2.5in,angle=270]{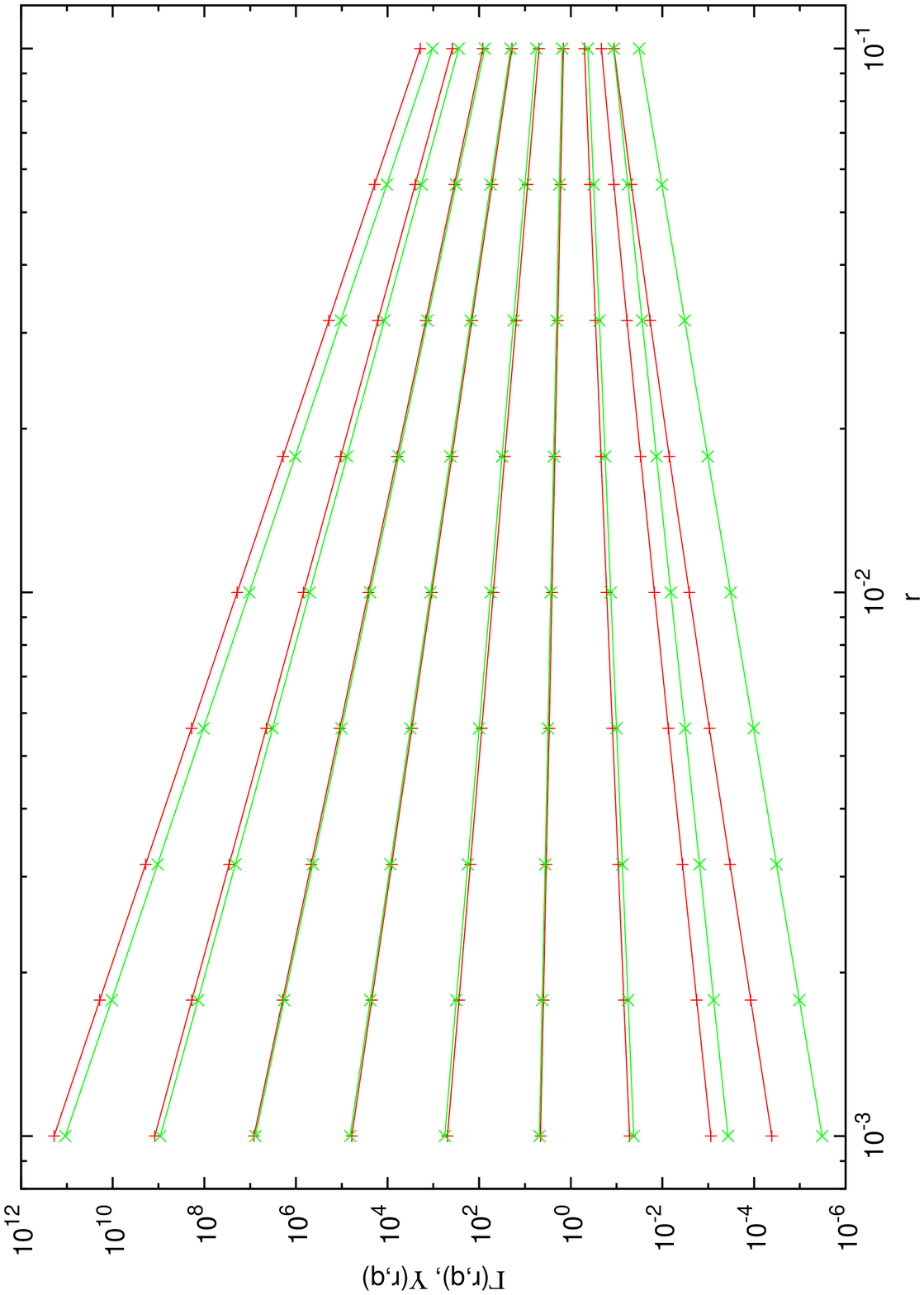}
    \end{subfigure}%
    ~
    \begin{subfigure}[t]{0.5\textwidth}
        \centering
      \includegraphics[height=2.5in,angle=270]{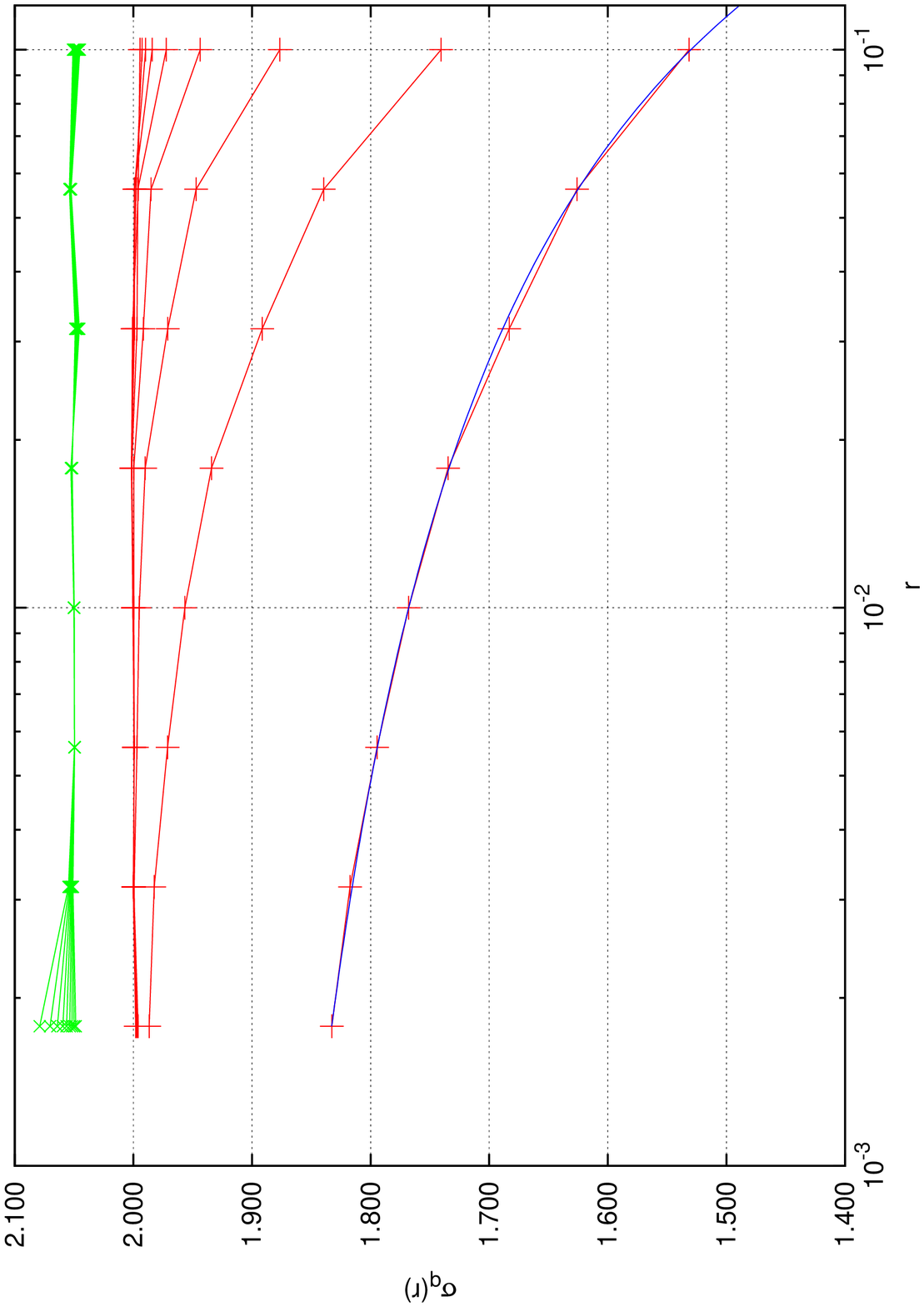}
    \end{subfigure}
   \caption{Left panel: correlation integral $\Gamma_\mu(r,q)$ (green lines) and hitting integral $\Upsilon_\mu(q,r)$ (red lines) evaluated numerically by the procedure of eqs. (\ref{kac2f}) -- (\ref{kac2g}) and (\ref{eq-hiti3}) -- (\ref{eq-hiti4}) with $H=32$, $N'=256,000$. Lines join values with the same $q$, ranging from $q=-1$ (highest curve) to $q=2$ (lowest). Right panel: slopes $\sigma_q(r)$ extracted from $\Upsilon_\mu(q,r)$ in the left panel, following Eq. (\ref{eq-hiti5}) (red). Values of $q$ range from $q=-1$ (highest curve) to $q=2$ (lowest). The blue curve is the fit given by Eq. (\ref{eq-hiti6}) with $D_2= 2.006$ and $B= 1.095$. Finally, values of $\sigma_q(r)$ extracted from $\Gamma_\mu(q,r)$ are plotted in green, shifted upwards by $.05$.
   }
   \label{fig-1}
\end{figure}

\begin{figure}
\centerline{\includegraphics[height=10cm,angle=270]{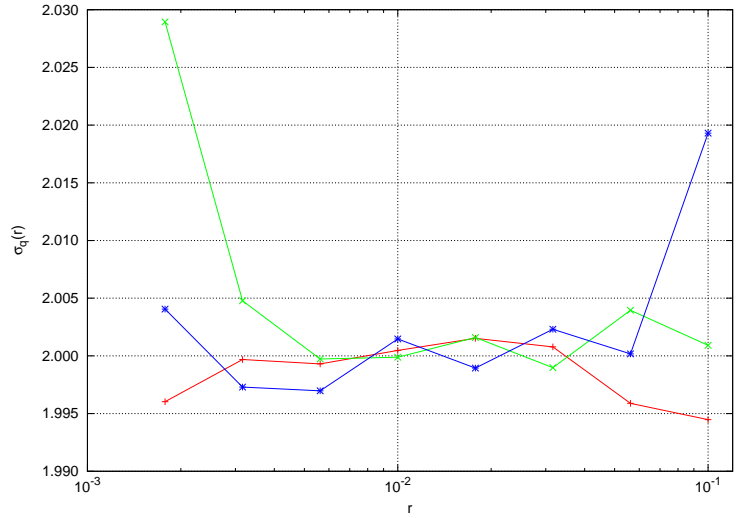}}
\caption{As in Figure \ref{fig-1}: data for $q=-1$. Also plotted (blue) are the data obtained by the first return time integral $\Gamma_\tau(q,r)$, Eq. (\ref{eq-hiti10}).}
\label{fig-sigma2c}
\end{figure}

A second example, when the hypotheses of Proposition \ref{prop-1} are certainly {\em not} verified, is given by the H\'enon map, at standard parameter values \cite{hen}.
Figure \ref{fig-hen0} is the analogue of Fig. \ref{fig-1} in this second case, with the only difference that in the right panel a three dimensional figure displays the quantity $\sigma_q(r)$ versus $r$ and $q$, computed from the correlation integral $\Gamma_\mu(q,r)$ and the hitting time integral $\Upsilon_\mu(q,r)$. In both cases we observe the large local fluctuations typical of the H\'enon physical measure. More importantly, a considerable agreement between the two sets of data is observed, when $q$ is smaller than two.

In the successive Figure \ref{fig-hen2} the generalized dimension obtained by linear least square fit over the full range of the data in Figure \ref{fig-hen0}, left panel, are displayed. It is well known that, in the case of the physical measure on the H\'enon attractor, generalized dimensions strongly depend on the range of the fit --- as well as on the sampling point chosen in this range. We do not aim to resolve this issue, but we remark that the coincidence between the results obtained by the correlation integral and the hitting time integral suggests that the results of Proposition \ref{prop-1} hold also in this case, which is clearly outside the scope of the hypotheses put forward in the previous section.

Finally, still in Figure \ref{fig-hen2}, we also plot the curve $D_2/(q-1)$, which follows from Proposition \ref{prop-1} and describes the scaling of the hitting time integral for $q$ larger than or equal to two. As in the case of Arnol'd cat, data for $q$ approaching two from below are not at convergence, while those for $q$ significantly larger than two fit the theoretical curve remarkably well.

\begin{figure}[h!]
    \centering
    \begin{subfigure}[t]{0.5\textwidth}
        \centering
    \includegraphics[height=2.5in,angle=270]{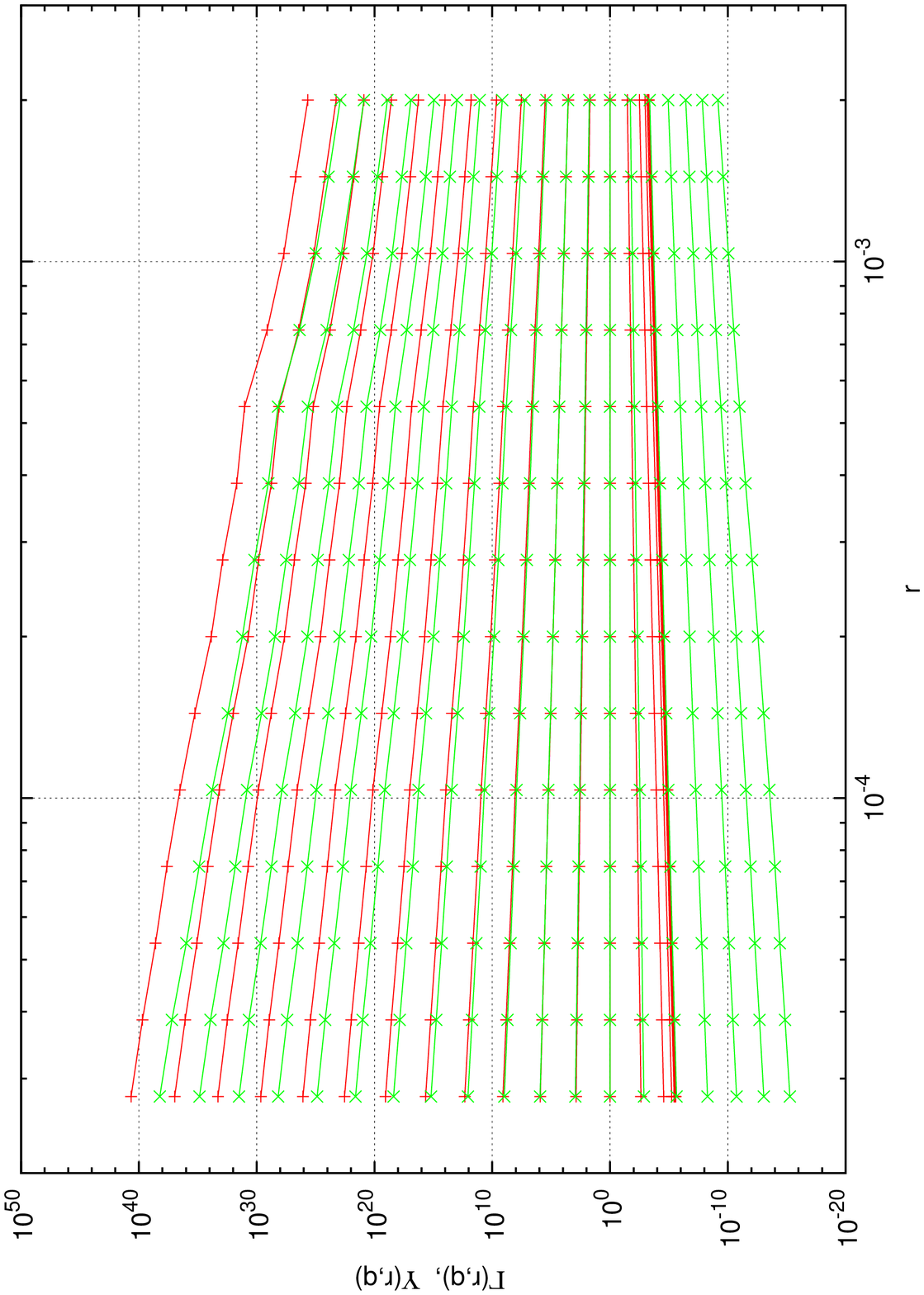}
    \end{subfigure}%
    ~
    \begin{subfigure}[t]{0.5\textwidth}
        \centering
       \includegraphics[height=3in,angle=270]{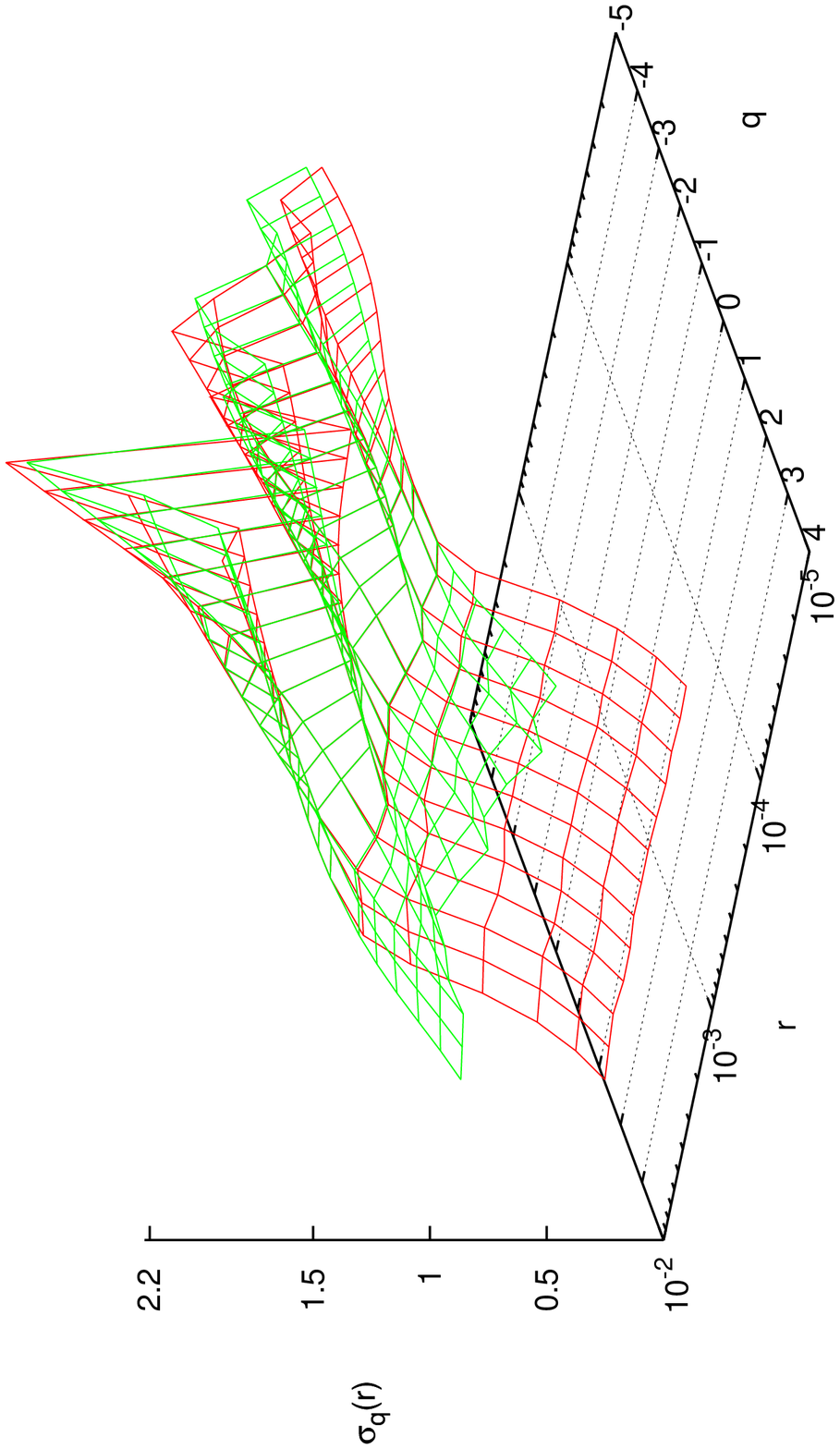}
    \end{subfigure}
   \caption{Left panel: correlation integral $\Gamma_\mu(r,q)$ (green lines) and hitting integral $\Upsilon_\mu(q,r)$ (red lines) evaluated numerically by the procedure of eqs. (\ref{kac2f}) -- (\ref{kac2g}) and (\ref{eq-hiti3}) -- (\ref{eq-hiti4}) with $H=64$, $N'=256,000$. Lines join values with the same $q$, ranging from $q=-5$ (highest curve) to $q=4$ (lowest). Right panel: slopes $\sigma_q(r)$ versus $r$ and $q$ extracted from
   $\Gamma_\mu(q,r)$ (green) and $\Upsilon_\mu(q,r)$ (red) in the left panel, following Eq. (\ref{eq-hiti5}).
   }
   \label{fig-hen0}
\end{figure}

\begin{figure}
\centerline{\includegraphics[height=10cm,angle=270]{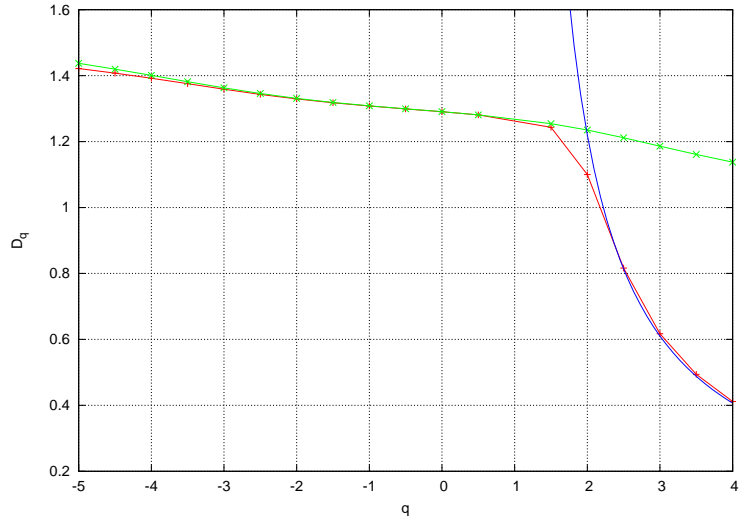}}
\caption{Generalized dimensions obtained by fitting the data in Fig. \ref{fig-hen0}, left panel. Dimensions obtained from the correlation integral are plotted in green, from the hitting time integral in red. Plotted in blue is the curve $D_2/(q-1)$, implied by Proposition \ref{prop-1}.}
\label{fig-hen2}
\end{figure}

These experimental data leads us to conclude that the theoretical method to determine generalized dimensions implied by Proposition \ref{prop-1} has a practical value, but, for values of $q$ between one and two, convergence must be accelerated by suitable techniques.
We finally remark that, as conjectured in \cite{noiprl,iojstat}, the same can be expected when using first return times.

\subsection{Using local dimensions computed via EVT}
\label{sub-local}

As described above, the key to the computation of generalized dimensions is the estimate of the measure of balls of the same radius $r$, raised to a power and averaged with respect to the invariant measure. While dimensions are obtained via a scaling relation of these quantities when the radius vanishes, the distribution of such measures at fixed, finite $r$, is also important. This observation leads to the definition of a {\em finite resolution} local dimension, $D_{1,r}(z)$, which is precisely defined by the equation:
\beq
\mu(B(z,r)) = r^{D_{1,r}(z)}.
\nuq{eq-locdim1}
It is interesting to note the relations of this quantity to extreme value theory. In fact, defining as observable the function
\beq
\phi_z(x)=-\log d(x,z),
\nuq{7}
where $z$ is the center of the ball in Eq. (\ref{eq-locdim1}), and computing this latter on a trajectory of the system, $x_j = T^j (x_0)$, large values of $\phi_z(x_i)$ correspond to passes of the motion close to the point $z$. By looking at the statistics of these extreme events--near approaches, one defines the (complementary) distribution function
\beq
    \bar{F_z}(u) = \mu(\{{x} \in M \mbox{ s.t. } \phi_z ({x}) >u \}),
\nuq{eq-phiz1}
which coincides with the measure of the ball of radius $e^{-u}$ around the point $z$. From the numerical point of view, as described in \cite[][Chapters 4 and 6 and references therein]{book}, one studies the tail of this distribution, either defined by considering arguments larger than $u_{\mbox{\tiny cut}} = - \log r_{\mbox{\tiny cut}}$, or by setting a cutoff value in the distribution itself: $\bar{F_z} < 1 - p$, with $p$ close to one. This second case yields a cutoff value $r_{\mbox{\tiny cut}}$, which now depends on position. Extreme value theory predicts that the tail distribution, suitably renormalized and shifted, converges for small cutoff to an exponential distribution, whose mean and standard deviation are the inverse of $D_{1,r_{\mbox{\tiny cut}}}(z)$. The latter is then numerically computed as the inverse of the mean of such distribution.  In other words, this is an alternative procedure to Eq. (\ref{kac2f}) that can be used in two ways.

Firstly, it can be turned into a determination of generalized dimensions.  Eq. (\ref{kac2g}) is here replaced by
\beq
\Gamma_\mu(r,q) \simeq \frac{1}{N} \sum_{j=0}^{N-1}r^{(q-1) D_{1,r}(T^jx)},
\nuq{8}
where $x$ is chosen $\mu-a.e.$ , $N$ is supposed to be large and dimensions are obtained by Eqs. (\ref{d2}) and (\ref{eq-tau}). It is sometimes both necessary and convenient {\em not} to take the limit of vanishing $r$ in Eq. (\ref{d2}). In practical applications this is sometimes dictated by the finite resolution of the data and the limited span of time evolution at our disposal. In Section \ref{sec-climate} this situation is illustrated by applying the above procedure to the spectrum of dimensions for the North Atlantic atmospheric circulation.

Secondly, the same computations permit to evaluate in a direct way the large deviation function of local dimension: by taking $I=(s,\infty)$ with $s>D_1$ in Eq. (\ref{5ab}), we find that $\mu(D_{1,r}>s) \sim r^{Q(s)}$. Similarly, we have that $\mu(D_{1,r}<s) \sim  r^{Q(s)}$ for $s<D_1$. Figure \ref{fig-serp} shows the numerically computed rate function $Q(s)$ for the motion on a Sierpinski gasket defined in Section \ref{sub-block}, Eq. (\ref{17.5}). By lowering the cutoff value of $r$, approach to the theoretical curve is observed. This theoretical value is given by the rate function $Q(s)$, which is computed as the Legendre-Frenchel transform of the free energy $R(q)=-\tau(1-q)$.  For the case of motion on the Sierpinski gasket, the function $\tau(q)$ is explicitly given by formula (\ref{TT}). Numerically, it can be obtained by the techniques described in this article, yielding results for $Q(s)$ more reliable than those obtained by the direct computation of the distribution of $\mu(D_{1,r}<s)$: this is undoubtedly an interesting result with potential applications to a wide class of dynamical systems.

\begin{figure}
\centerline{\includegraphics[height=10cm]{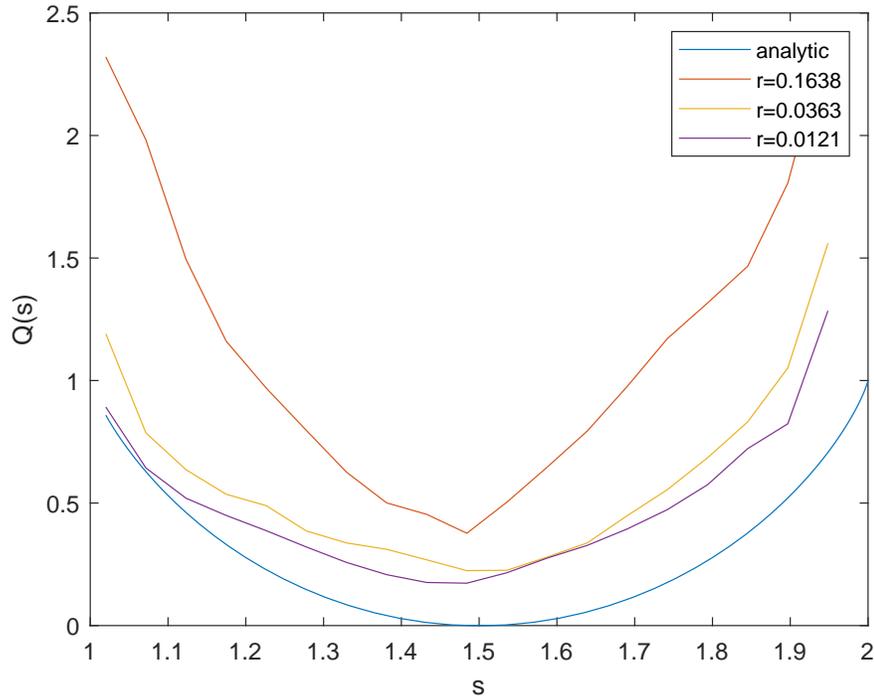}}
\caption{Rate function $Q(s)$ for the motion on a Sierpinski gasket, computed from $1,000$ sampling points, each of which required a trajectory consisting of $10^6$ iterates.}
\label{fig-serp}
\end{figure}

\section{Generalized dimensions via extreme value theory}
\label{sec-gdevt}
In this section we describe a further method to compute the spectrum of the generalized dimensions for positive, integer values of $q$ larger than one.  It has the advantage of using EVT intrinsically and, in addition, it reveals a second spectrum of extremal indices that also possesses a dynamical meaning. This approach is a direct generalization of the method introduced in \cite{D2} for the correlation dimension. It is based on the investigation of close encounters, when two or more trajectories of the system approach each other within a small distance. This defines the extreme event that we investigate.

Let us consider the $q-$fold ($q>1$) direct product $(M,\mu,T)^{\otimes q}$ with the direct product map $T_q=T\otimes \cdots \otimes T$ acting on the product space $M^q$ and the product measure $\mu_q=\mu \otimes \cdots \otimes \mu$. Define the following observable on $M^q$:
\beq
\phi(x_1,x_2,\dots,x_q)=-\log(\max_{i=2,\dots,q}d(x_1,x_i)),
\nuq{9b}
where each $x_i \in M$. We also write $\overline{x}_q=(x_1,x_2,\dots,x_q)$ and $T_q(\overline{x}_q)=(Tx_1,\dots,Tx_q)$.

\subsection{Statistics of exceedances}
\label{sub-exce}
Let us first investigate the statistical distribution of the function $\phi$, via the (complementary) distribution function $\bar{F}(u)$:
\beq
    \bar{F}(u) = \mu_q(\{\overline{x}_q \in M^q \mbox{ s.t. } \phi (\overline{x}_q) >u \}).
\nuq{eq-phiq1}
It is easily seen that
\beq
\bar{F}(u) =\int_{M^q} d \mu_{q}(\overline{x}_q) \chi_{B(x_1,e^{-u})}(\overline{x}_q)
\cdots \chi_{B(x_1,e^{-u})}(\overline{x}_q)
                     =\int_M d \mu(x_1) \mu (B(x_1,e^{-u}))^{q-1}.
\nuq{14a}
Comparing eq. (\ref{14a}) with eq. (\ref{1}) yields
$
 \bar{F}(u) \sim e^{-u_nD_q(q-1)},
$ 
so that one can obtain $\tau(q)=(q-1)D_q$ from the asymptotic behavior of $\bar{F}(u)$ for large $u$. This quantity can be estimated by a Birkhoff sum, involving the trajectories of $q$ different initial conditions of the original system. The results of this procedure in the case of the Arnol'd cat dynamical system are reported in Figure \ref{fig-manyp1}, in simple logarithmic scale, for values of $q$ ranging from $q=2$ to $q=8$. The linear parts of these graphs follow closely the theoretical result $\bar{F}(u) =\mu_q(\phi>u) =  \pi^{q-1} u^{2(q-1)}$. The limitations of the procedure are evident from the picture: for large values of $q$ multiple ``encounters'' become scarcer and scarcer, so that the linear part, from which generalized dimensions can be extracted by linear fitting, becomes increasingly narrow when the length of the sampling trajectory is finite. Observe that the number of iterations considered in our numerical simulation largely exceed those typically available in real--world applications. On the other hand, this technique is not directly affected by the {\em curse of dimensionality} which plagues box counting procedures (but not the correlation integral, or hitting/return times methods, for that matter).

To complete the analysis of the previous section we also examine the case of the H\'enon physical measure. Results are reported in Figure \ref{fig-manyq1}, in full analogy with Figure \ref{fig-manyp1}. The exponential decay is evident also here, and the slopes of the curves, together with eq. (\ref{15}), permit to extract the data $\tau(2)=1.2$, $\tau(3)=2.32$, $\tau(4)=3.3$, which imply the generalized dimensions $D_2=1.2$, $D_3=1.16$ and $D_4=1.1$. These values compare favorably with the extensive calculations in \cite{arneodo87}. Although the exponential decay of the data for larger values of $q$ is also evident, the data do not allow to estimate the associated dimensions with the same precision.

\begin{figure}
\centerline{
  \includegraphics[height=10cm,angle=270]{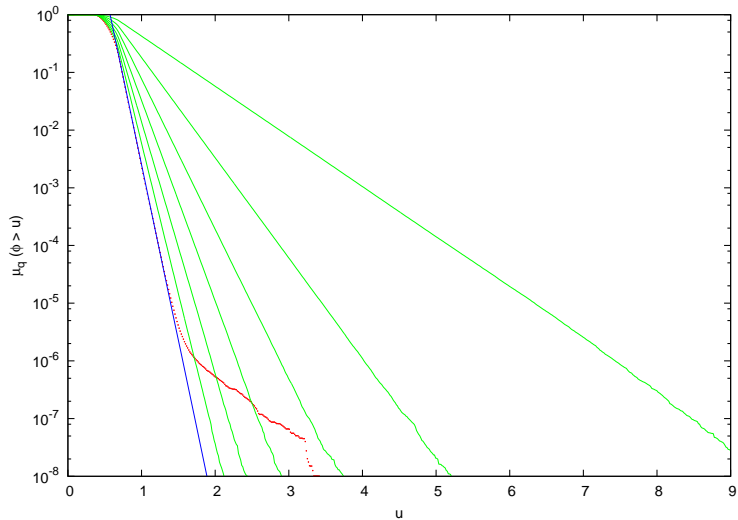}
}
\caption{Probability of large events, $\bar{F}(u) = \mu_q(\phi>u)$, versus $u$, in the case of the Arnol'd cat. It has been estimated via a Birkhoff average over 32 trajectories of length $10^{10}$. Data for $q=2$ (highest curve) to $q=7$ (green) and $q=8$ (red, lowest) have been reported. The theoretical result for $q=8$ is $\mu_q(\phi>u) = \pi^7 u^{14}$ (blue).}
\label{fig-manyp1}
\end{figure}
\begin{figure}
\centerline{
  \includegraphics[height=10cm,angle=270]{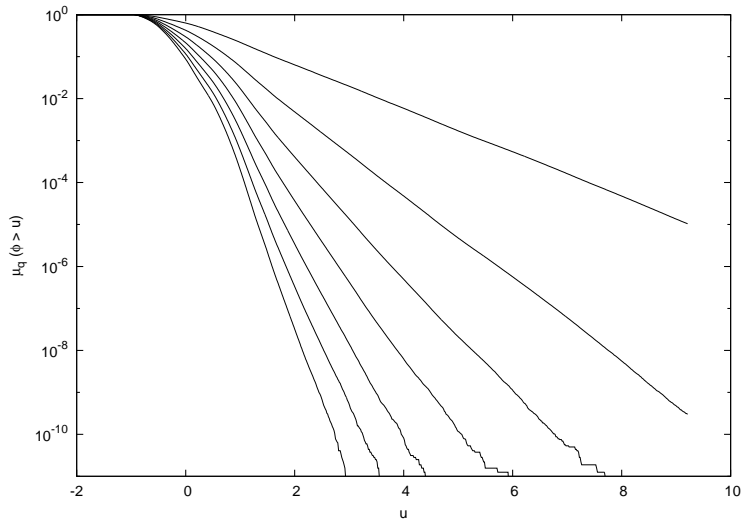}
}
\caption{Probability of large events, $\bar{F}(u) = \mu_q(\phi>u)$, versus $u$, in the case of the H\'enon attractor physical measure. It has been estimated via a Birkhoff average over 32 trajectories of length $10^{10}$. Data for $q=2$ (highest curve) to $q=8$ (lowest) have been reported. }
\label{fig-manyq1}
\end{figure}

\subsection{Statistics of block maxima}
\label{sub-block}
Let us now move more deeply into extreme value theory. It is a standard procedure, employed in the present context also in \cite{D2}, to consider the maximum value attained by the function $\phi$ over a {\em block of times} of length $n$. That is, we define the new observable
\beq
M_n(\phi;\overline{x}_q)=\max \{\phi(\overline{x}_q),\dots,\phi(T^{n-1}_q(\overline{x}_q)\},
\nuq{10}
and its distribution function ${F}_n(u)$:
\beq
    {F}_n(u) = \mu_q(\{\overline{x}_q \in M^q \mbox{ s.t. } M_n(\phi;\overline{x}_q) \leq u  \}).
\nuq{eq-phiq2}
Next, let $u_n$ be a sequence of real values which diverges at infinity, for which
\beq
 \bar{F}(u_n) \sim \frac{t}{n}
\nuq{13}
as $n$ tends to infinity and where $\bar{F}$ has been defined in Eq. (\ref{eq-phiq1}). In these equations, $t$ is a positive number (see Chapter 3 in \cite{book} for a general introduction to extreme value theory).
Under the hypotheses put forward in Section \ref{sec-largedev}, by using the spectral technique described in \cite{D2}, it is possible to prove the convergence of the distribution ${F}_n$, suitably rescaled, to the Gumbel's law
\beq
G(t) = e^{-\theta_q t}.
\nuq{12}
The quantity $\theta_q$ is called the dynamical extremal index DEI and it will be studied in the next section.

This convergence can also be investigated numerically and it provides an estimate of the generalized dimensions. In fact, because of eq. (\ref{13})
\beq
 \bar{F}(u_n) \sim e^{-u_nD_q(q-1)} \sim \frac{t}{n},
\nuq{15}
and
\beq
u_n \sim \frac{-\log t}{D_q(q-1)}+\frac{\log n}{D_q(q-1)}=\frac{-\log t}{a_n} + b_n.
\nuq{16}
The real quantities $a_n$ and $b_n$ can be obtained by  a maximum likelihood estimation of the GEV parameters in $F_n$ \cite{coles}. This is achieved numerically with the Matlab  {\em gevfit} function \cite{gevfit}. This was described in Section II-A of \cite{D2}, which yields the generalized dimensions $D_q$.

We apply this procedure to the case of an I.F.S. measure \cite{dem} on the Sierpinski gasket, defined by the stochastic process on the unit square $M=[0,1]^2$ realized by iteration of the maps $f_i$, $i=1,2,3$ chosen at random with probability $p_i$:
\beq
 \left\{
    \begin{array}{ll}
   f_1(x,y)=(x/2,(y+1)/2), \;p_1=1/4,\\
f_2(x,y)=((x+1)/2,(y+1)/2),\; p_2=1/4,\\
f_3(x,y)=(x/2,y/2),\; p_3=1/2.\\
    \end{array}
\right.
\nuq{17.5}
The distribution $F_n$ with block size $n=5000$ is estimated for 20 trajectories of $2\cdot 10^8$ points. In Figure \ref{fig-lor}, left panel, the numerically obtained generalized dimensions are compared with the analytical values \cite{rie}:
\begin{equation}\label{TT}
D_q=\frac{\log_2(p_1^q+p_2^q+p_3^q)}{1-q}.
\end{equation}
Good agreement is found for small values of $q$, which later worsens as expected and discussed earlier. In the same Figure \ref{fig-lor}, right panel, we also plot the results for the case of the Lorenz 1963 model \cite{lorenz},
a continuous--time dynamical system. Here, the distribution $F_n$ (with $n=10^4$) is obtained from trajectories of $10^8$ points, simulated by the Euler method (which is clearly not the best technique, but the focus of our investigation is different) with step size $0.013$. Dimensions estimates are obtained by averages over 20 trajectories, uncertainties being the standard deviations of these results.

\begin{figure}[h!]
    \centering
    \begin{subfigure}[t]{0.5\textwidth}
        \centering
        \includegraphics[height=2.in]{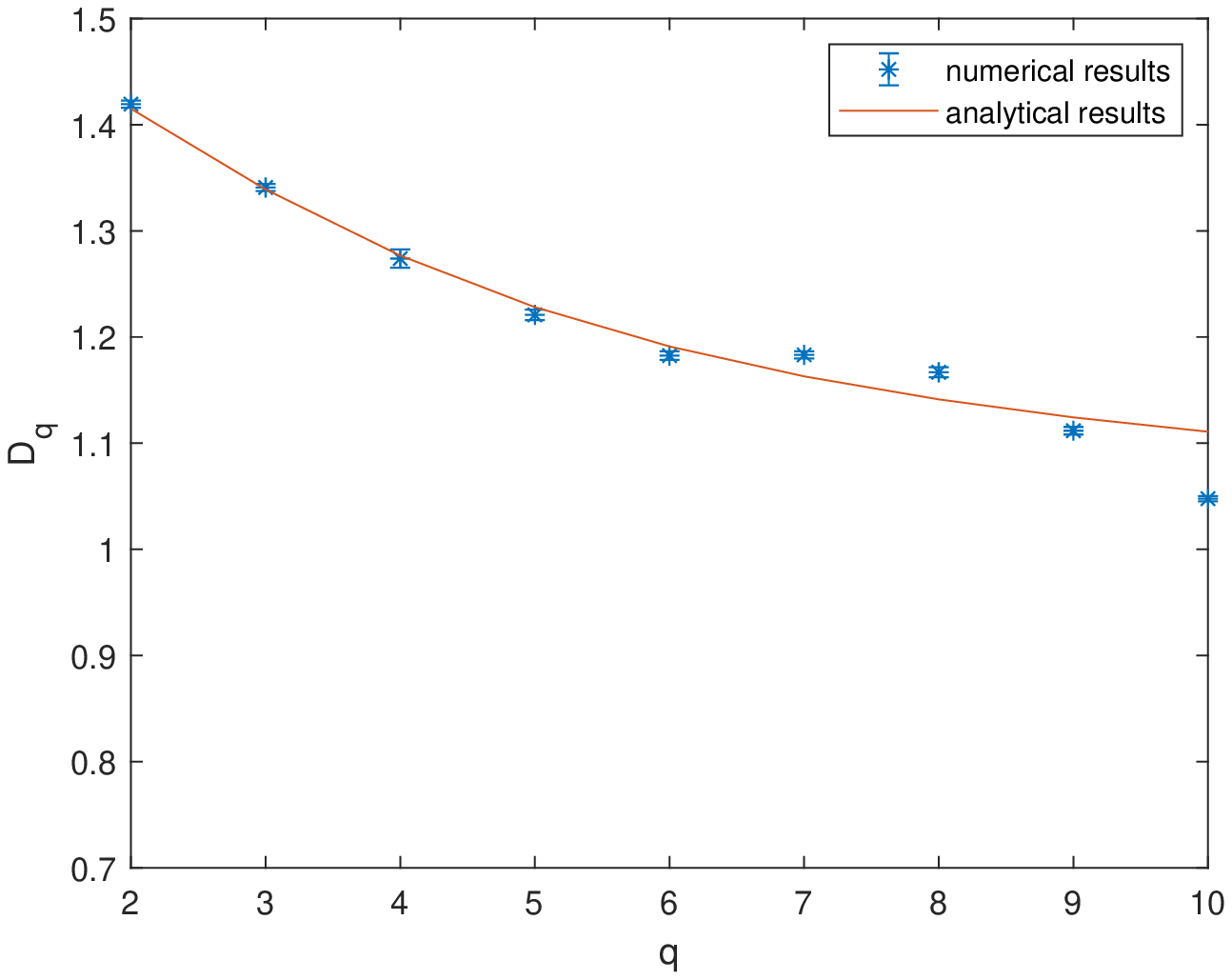}
    \end{subfigure}%
    ~
    \begin{subfigure}[t]{0.5\textwidth}
        \centering
     \includegraphics[height=2in]{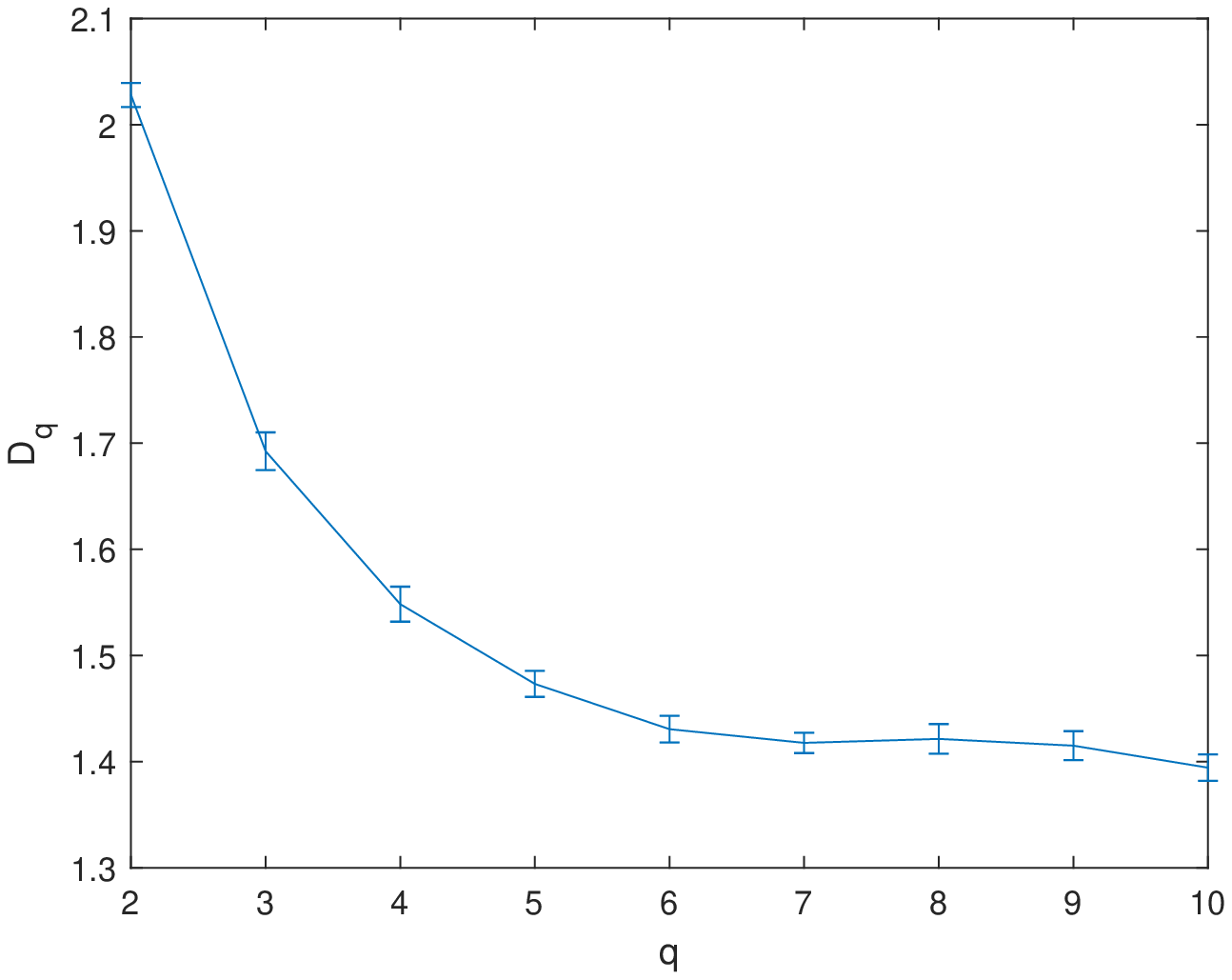}
    \end{subfigure}
   \caption{Left panel: numerical estimates of $D_q$ for the Sierpinski gasket (blue symbols) and theoretical value (red curve).  Right panel: numerical estimates of $D_q$ for the Lorenz 1963 model. The uncertainty is the standard deviation of the results obtained over 20 trajectories. See text for parameters and discussion.
   }
   \label{fig-lor}
\end{figure}

\section{The dynamical extremal index}
\label{sub-dei}

It is now important to consider the parameter $\theta_q$ appearing in the exponent of the Gumbel's law (\ref{12}); in \cite{D2} it was called the Dynamical Extremal Index (DEI). To do this, define the following subset of $M^q$:
\beq
\Delta_n^q=\{\overline{x}_q \in M^q \mbox{ s.t. }  d(x_1,x_2)<e^{-u_n}, \ldots ,d(x_1,x_q)<e^{-u_n}\}.
\nuq{17}
As argued in \cite{D2}, also using the spectral technique, based upon the analytical results in \cite{D1}, it is possible to show that:
\beq
\theta_q=1-\lim_{n \to \infty} \frac{\mu_q(\Delta_n^q \cap T_q^{-1}\Delta_n^q)}{\mu_q(\Delta_n^q)}.
\nuq{18}
For $C^2$ expanding maps of the interval, which preserve an absolutely continuous invariant measure $\mu=hdx$ with strictly positive density $h$ of bounded variation, it is possible to compute the right hand side  of (\ref{18}) and get:
\begin{multline}\label{20a}
\mu_q(\Delta_n^q \cap T_q^{-1}\Delta_n^q)=
\int dx_1h(x_1) \int dx_2h(x_2)\ \chi_{B(x_1,e^{-u_n)})}(x_2)\chi_{B(Tx_1,e^{-u_n)})}(Tx_2) \cdots \\
\cdots \int dx_q h(x_q)\chi_{B(x_1,e^{-u_n)})}(x_q)\chi_{B(Tx_1,e^{-u_n)})}(Tx_q).
\end{multline}
Each of the $q-1$ integrals above factorize, and they depend on the parameter $x_1$. Therefore they can be treated as in the proof of Proposition 5.5  in \cite{D1}, yielding the rigorous result:
\begin{proposition}\label{prop-20}
Suppose that: the map $T$ belongs to $C^2$; it preserves an absolutely continuous invariant measure $\mu=hdx$, with strictly positive density $h$ of bounded variation; it verifies conditions $P1-P5$ and $P8$ in \cite{D1}\footnote{These conditions essentially ensure that the transfer operator associated with the map $T$ has a spectral gap and that the density $h$ has finite oscillation in the neighborhood of the diagonal.}. Then
\beq
\theta_q=1-\frac{\int\frac{h(x)^q}{\mid DT(x)\mid^{q-1}}dx}{\int h(x)^qdx}.
\nuq{20}
\end{proposition}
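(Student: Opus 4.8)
\medskip
\noindent\textbf{Proof strategy.}
The plan is to evaluate the limit in Eq.~(\ref{18}) directly, using the factorized representation (\ref{20a}) of the numerator together with the analogous, $T$-free factorization of the denominator, and then to reduce the whole ratio to two elementary one-dimensional asymptotic estimates as $n\to\infty$. First I would record that, exactly by the same manipulation that produced (\ref{20a}),
\beq
\mu_q(\Delta_n^q)=\int h(x_1)\,\big(\mu(B(x_1,e^{-u_n}))\big)^{q-1}\,dx_1 ,
\eeq
while (\ref{20a}) itself reads $\mu_q(\Delta_n^q\cap T_q^{-1}\Delta_n^q)=\int h(x_1)\,\big(g_n(x_1)\big)^{q-1}\,dx_1$, with the common inner factor $g_n(x_1)=\int h(y)\,\chi_{B(x_1,e^{-u_n})}(y)\,\chi_{B(Tx_1,e^{-u_n})}(Ty)\,dy$. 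Everything is thus reduced to the asymptotics of the two inner integrals $\mu(B(x_1,e^{-u_n}))$ and $g_n(x_1)$.

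The second step is the pair of local estimates; write $r_n=e^{-u_n}\to 0$. Since $h$ is bounded and of bounded variation, a one-dimensional Lebesgue differentiation argument gives $\mu(B(x_1,r_n))=2r_n\,h(x_1)\,(1+o(1))$ for a.e.\ $x_1$, the error being controlled by $r_n\,\mathrm{osc}_{B(x_1,r_n)}h$. For $g_n$, the additional constraint $Ty\in B(Tx_1,r_n)$ is the binding one: because $y\in B(x_1,r_n)$ with $r_n\to0$ and $T\in C^2$, the expansion $Ty-Tx_1=DT(x_1)(y-x_1)\,(1+O(r_n))$ together with $|DT(x_1)|>1$ shows that the admissible set for $y$ is the interval of half-width $r_n/|DT(x_1)|$ about $x_1$, up to a factor $1+O(r_n)$; integrating $h$ over it gives $g_n(x_1)=2\,r_n\,|DT(x_1)|^{-1}\,h(x_1)\,(1+o(1))$. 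Only the local branch of $T$ is relevant for $n$ large, the set of $x_1$ whose $r_n$-ball meets a branch endpoint having measure $O(r_n)$ by conditions $P1$--$P5$.

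Substituting these estimates into the two factorized integrals, the common prefactor $(2r_n)^{q-1}$ cancels between numerator and denominator, leaving
\beq
\frac{\mu_q(\Delta_n^q\cap T_q^{-1}\Delta_n^q)}{\mu_q(\Delta_n^q)}\;\longrightarrow\;
\frac{\int h(x)^q\,|DT(x)|^{-(q-1)}\,dx}{\int h(x)^q\,dx}\qquad (n\to\infty),
\eeq
which inserted into (\ref{18}) yields (\ref{20}). The step I expect to be the real obstacle is the legitimacy of passing to the limit under the integral sign: the estimates above are pointwise-a.e.\ statements and must be upgraded to an integrated (dominated-convergence) conclusion. This is precisely where the hypotheses do their work: strict positivity and bounded variation of $h$ on the compact interval bound $h$ above and below and make $\int \mathrm{osc}_{B(x_1,r_n)}h\,dx_1=O(r_n\,\mathrm{Var}\,h)\to0$ (condition $P8$ being exactly the finite-oscillation requirement near the diagonal), the $C^2$ bound on $T$ makes the Taylor remainder uniform, and conditions $P1$--$P5$ (spectral gap, control of the branch structure) make the exceptional sets negligible. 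Once these uniform bounds are in place the interchange is routine; indeed, after the factorization this is exactly the computation carried out in the proof of Proposition~5.5 of \cite{D1}, which can be quoted almost verbatim.
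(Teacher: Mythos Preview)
Your proposal is correct and follows essentially the same route as the paper: factorize the numerator via (\ref{20a}) and the denominator analogously, reduce to the local asymptotics of the inner one-dimensional integrals, and invoke Proposition~5.5 of \cite{D1} for the rigorous passage to the limit. The paper's own argument is in fact terser than yours---it simply records (\ref{20a}), notes that the $q-1$ inner integrals factor and depend only on $x_1$, and defers all further details to \cite{D1}---so your write-up amounts to an expanded version of the same computation.
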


This formula uses the translational invariance of the Lebesgue measure: we refer to sections II-B and II-C in \cite{D2} for analogous extensions to more general invariant measures and to SRB measures for attractors. As remarked in \cite{D2}, whenever the density does not vary too much, or alternatively the derivative (or the determinant of the Jacobian in higher dimensions) are almost constant, we expect a scaling of the kind:
\beq
\theta_q \sim 1-e^{-(q-1)h_m},
\nuq{21}
where $h_m$ is the metric entropy (the sum of the positive Lyapunov exponents).
This can be verified for the map $x \mapsto 3x \mbox{ mod }1$, for which eq. (\ref{20}) can be easily computed, giving $\theta_q=1-3^{1-q}$. Figure \ref{fig-3x} reports the numerical results. Extremal indices are computed using the S\"uveges estimator \cite{suveges} . For each $q$, we used as a threshold the $0.997$ quantile of the distribution ${F}(u)$ computed on a pre-runned trajectory of $10^6$ points. Results compare favorably with theory for $\theta_q$, but less satisfactorily for $H_q$ when $q$ becomes large. In the last case, it also turns out that the fixed threshold corresponds to lower values of the function $\phi$. Less precise results are also probably due to the fact that $\log(1-\theta_q)$ diverges as $q$ tends to infinity.

\begin{figure}[h!]
  \centering
  \begin{subfigure}[t]{0.5\textwidth}
       \centering
       \includegraphics[height=2in]{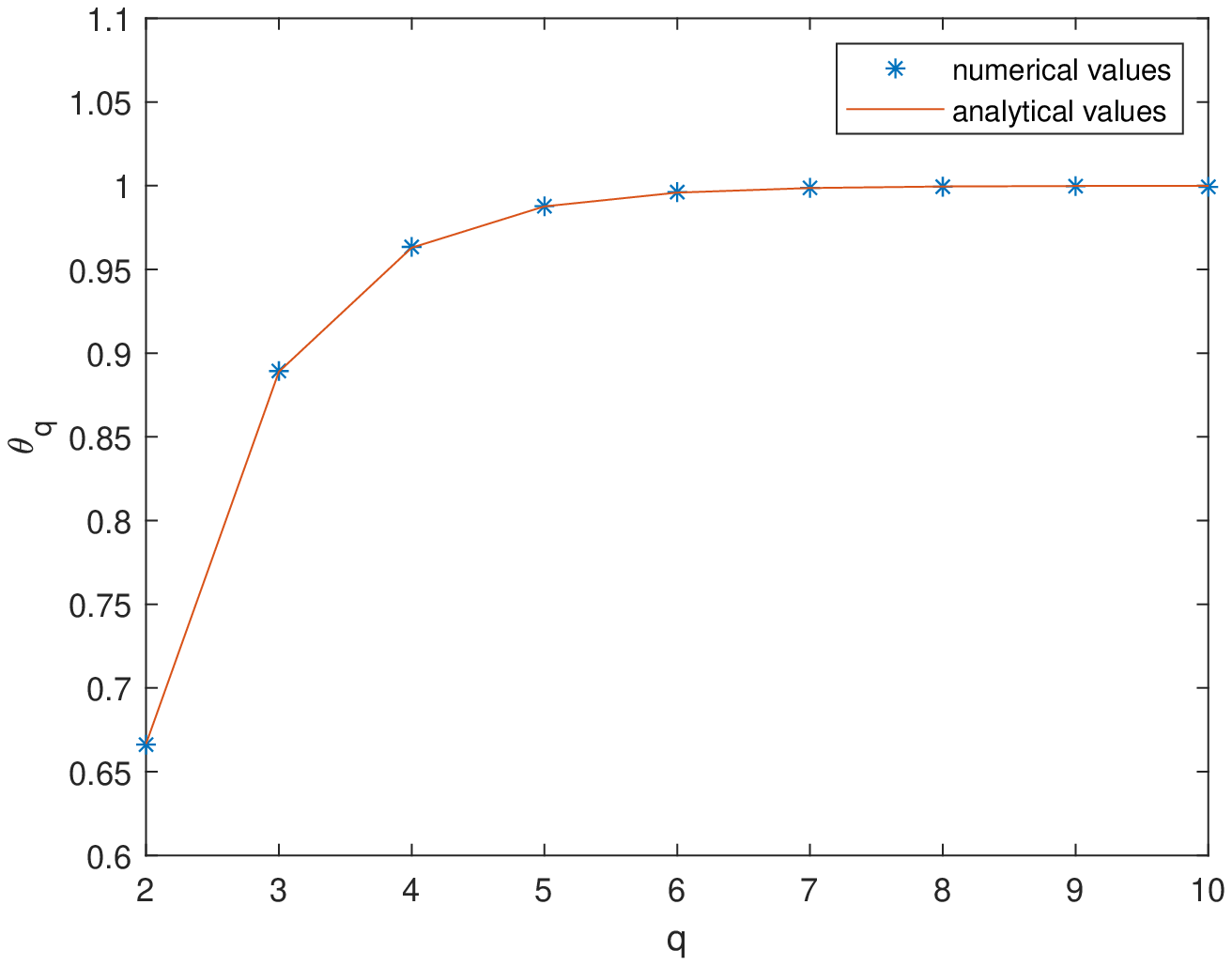}
        \caption{$\theta_q$}
    \end{subfigure}%
    ~
    \begin{subfigure}[t]{0.5\textwidth}
        \centering
       \includegraphics[height=2in]{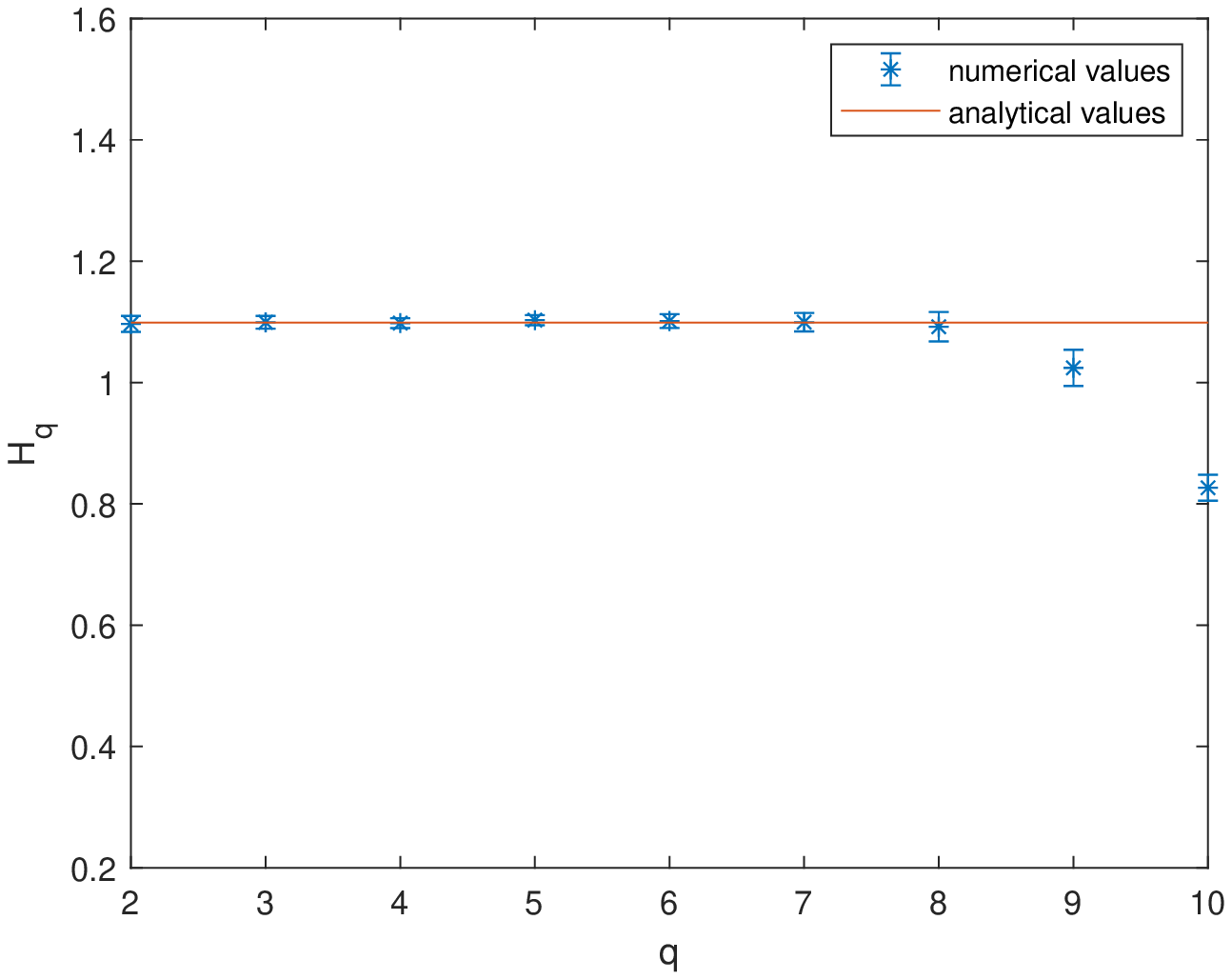}
        \caption{$H_q$}
    \end{subfigure}

    \caption{Indicators $\theta_q$ and $H_q$ of the map $x \mapsto 3x \mbox{ mod } 1$, obtained from averaging the results over 20 trajectories of $2\cdot 10^7$ points. The uncertainty is the standard deviation of the results. See text for further discussion.}
 \label{fig-3x}
 \end{figure}

Whenever the density, or the derivative, or both, exhibit appreciable variations, we expect a deviation from the scaling in eq. (\ref{21}). Hence, the variation with $q$ of the quantity
\beq
H_q=\frac{\log(1-\theta_q)}{q-1}
\nuq{24}
reveals how far we are from the positive Lyapunov exponent (or the entropy in higher dimensions). In particular we expect that such deviations will be magnified when:
\begin{itemize}
\item  the density has a minimum at zero, a fact that usually happens when the derivative blows up to infinity, or when the derivative vanishes somewhere, like in  multimodal maps,
    \item the density is unbounded, which happens when the derivative  has one as an eigenvalue on  periodic points, a typical occurrence for intermittent maps.
        \end{itemize}
In both cases it is not possible to bound between finite quantities the second term at right hand side of $\theta_q$ in Proposition \ref{prop-20}; more importantly, it is not at all clear that eq. (\ref{20}) still holds, since it was proved under the assumptions of a bounded and strictly positive density.

As an illustration of this theory, we now treat a few examples. In the numerical simulations, the extreme value distribution has been obtained by Birkhoff sampling of a trajectory of $N= 2 \cdot 10^7$ points. To compute the distribution of maxima, we used the peak over threshold approach \cite{book}, the threshold being the quantile $\sigma=0.995$ of the distribution.

\begin{itemize}
\item {\em Markov maps.} We consider the following piecewise linear Markov  map $T$ \cite{GB}:
\[
T(x) = \left\{
    \begin{array}{ll}
        T_1(x)= 3x \ \text{if}\  x \in I_1=[0,1/3),\\
        T_2(x)=5/3-2x \ \text{if} \   x\in I_2=[1/3,2/3),\\
        T_3(x)=-2+3x \ \text{if} \ x \in I_3=[2/3,1).
    \end{array}
\right.
\]
The density $h$ of $T$ is given by :
\[
h(x) = \left\{
    \begin{array}{ll}
        h_1=3/5  \ \text{if} & x \in I_1,\\
        h_2=6/5 \ \text{if} & x \in I_2,\\
        h_3=6/5 \ \text{if} & x \in I_3.\\
    \end{array}
\right.
\]
The DEI  $\theta_q$ can be easily computed  by equation (\ref{20}) and it reads:
\beq
\theta_q=
1-\frac{\frac{h_1^q}{(T_1')^{q-1}}+\frac{h_2^q}
{(T_2')^{q-1}}+\frac{h_3^q}{(T_3')^{q-1}}}{h_1^q+h_2^q+h_3^q}.
\nuq{26}

\item {\em Gauss map}. The Gauss map $T(x)= \frac1x-\left[\frac1x\right], x\in (0,1],$ has a.c. invariant density $h(x)=\frac1{\log 2(1+x)}$, which yields
\beq
\theta_q= 1- \frac{[\sum_{k=0,k\neq q-1}^{2(q-1)}(-1)^k {2(q-1) \choose k}\frac{2^{k-q+1}-1}{k-q+1}] + (-1)^{q-1} {2(q-1) \choose q-1}\log2}{\frac{2^{1-q}-1}{1-q}}.
\nuq{22}
Numerical computations for this map are reported in Figure \ref{fig-gauss}.

\begin{figure*}[h!]
    \centering
    \begin{subfigure}[t]{0.5\textwidth}
      \centering
       \includegraphics[height=2in]{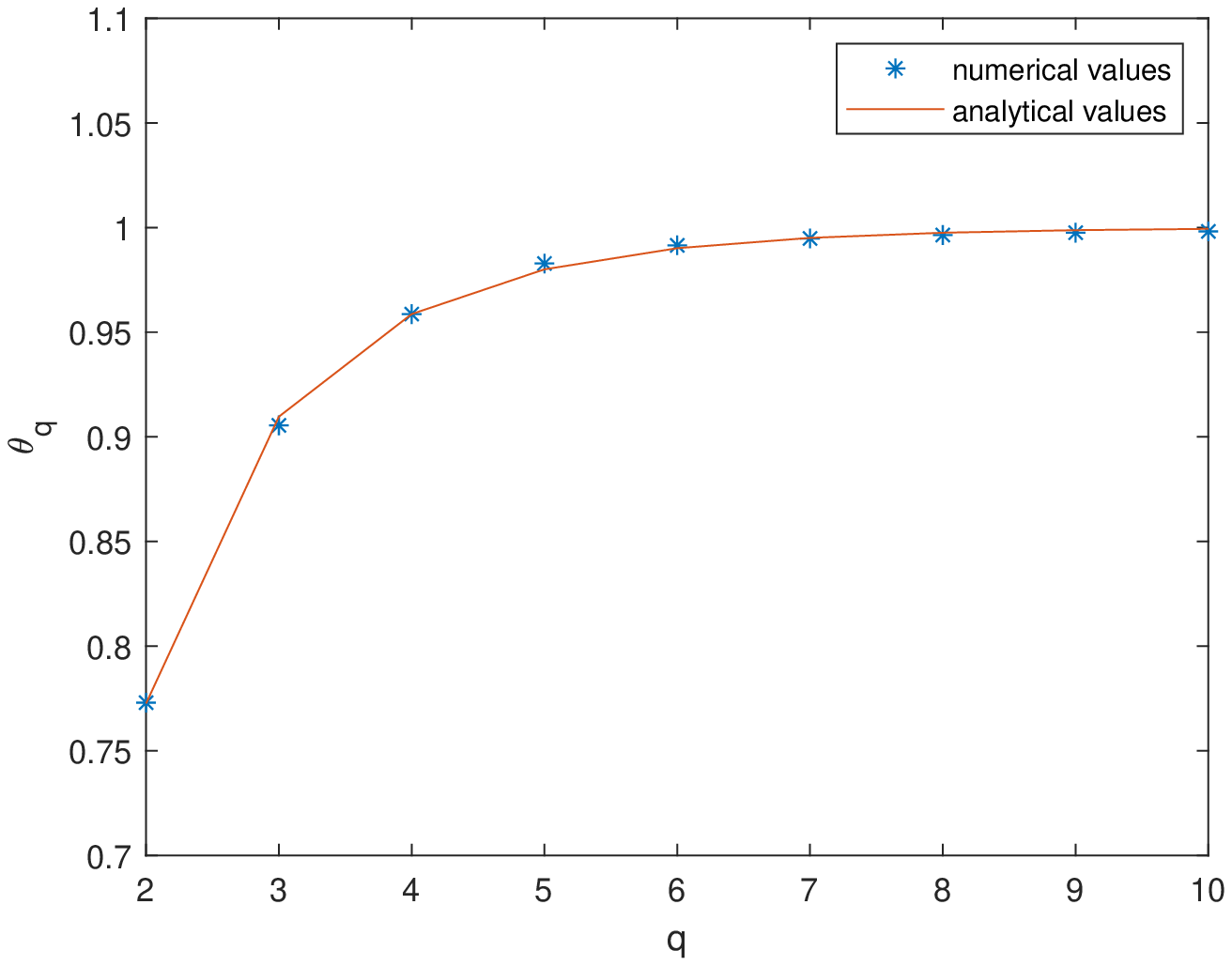}
        \caption{$\theta_q$}
    \end{subfigure}%
    ~
    \begin{subfigure}[t]{0.5\textwidth}
      \centering
       \includegraphics[height=2in]{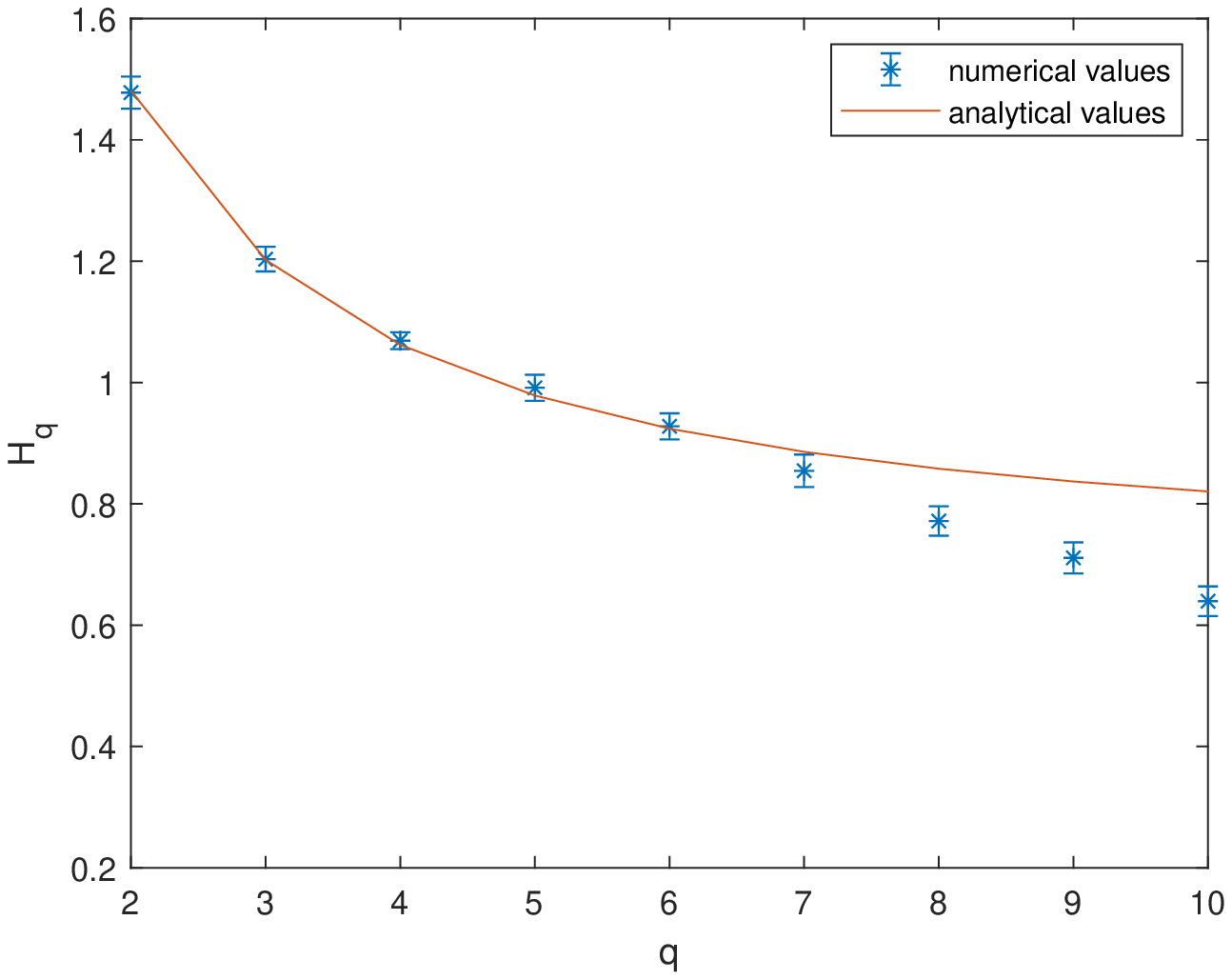}
        \caption{$H_q$}
    \end{subfigure}

    \caption{$\theta_q$ and $H_q$ of the Gauss map absolutely continuous invariant measure. Parameters of the numerical estimation are as in Fig. \ref{fig-3x}. }
\label{fig-gauss}\end{figure*}

\item {\em The Hemmer map.} This map \cite{hemmer} is defined on the interval $[-1,1]$ by  $T(x)= 1-2\sqrt{|x|}$ and it has the explicit density $h(x)=\frac{1}{2} (1-x)$ and Lyapunov exponent $1/2.$ The particularity of this density is that it vanishes in $x=1$.  The DEI for this case reads
\beq
 \theta_q= 1-\frac{q+1}{2^q} \sum_{k=0}^{q} {q \choose k}\frac{[1+(-1)^{k+q-1}]}{2k+q+1}.
\nuq{23}

\end{itemize}

\section{North Atlantic atmospheric variability}
\label{sec-climate}

In order to show the usefulness of our results in the study of many dimensional, complex systems, we compute the generalized dimensions associated with the atmospheric circulation over the North Atlantic. As observable, we consider the daily sea-level pressure fields observed in the region: [lat 22.5$^°$ N -- 70$^°$ N, lon 70$^°$ E -- 50$^°$ W] for the period 1948-2015, issued from the NCEP reanalysis  dataset \cite{NCEP}. Indeed, the sea-level pressure field is a proxy of the mid-latitude circulation as it traces the position of cyclones-anticyclones thanks to the rotation/stratification properties of atmospheric flows \cite{hoskin}.

References \cite{nature} and \cite{messori} computed the finite time local\footnote{Note that, in this context we also speak of daily dimension and persistence via EVT, meaning that each $z$ is the sea-level pressure field averaged during a day} dimensions $D_{1,r}(z)$ and persistence $\theta(z)$ of those fields with the method detailed in \cite{nature}, using as threshold the $98^{th}$ quantile of the observable distribution. It has been shown that those computations introduce a valuable piece of information in the description of the atmospheric flow at mid-latitudes. In particular i) the minima of the local dimension  $D_{1,r}(z)$ correspond to zonal flow circulation regimes, where the low pressure systems are confined to the polar regions, opposite to high pressure areas which insist on southern latitudes, in a North-South structure, ii) the maxima of the local dimension $D_{1,r}(z)$ correspond to blocked flows, where high and low pressure structures are distributed in Est-West direction. In this region, $D_{1,r}(z)$ takes values between 4 and 25, depending on the spatial resolution of the sea-level pressure field, with an average value of about 12.

In the previous sections we have described tools to compute the generalized dimensions and to link them to local dimensions. In principle, the method described in section \ref{sub-local} requires the computation of a distribution of local dimensions at a uniform resolution $r_{\mbox{\tiny cut}}$. The nature of data at our disposal is not suited to an analysis at fixed resolution, since it may oversample, or undersample the distributions of extreme events of the observable $\phi_z(x)=-\log(d(z,x))$, depending on the point $z$ that we are considering. For this reason, we follow the approach described in \cite{nature} and \cite{messori} and use as threshold values $T_{p,z}$ for the observable $\phi_z$ the $p$-quantile of the distribution of $\phi_z$, where $p$ is fixed. This ensures that the extreme value statistics is computed with the same sample statistics at all points. The effective radius considered for the computation of the generalized dimensions is then taken to be the average of $e^{-T_{p,z}}$ over $z$. Applying formula (\ref{8}) to the computation of $D_q$, one obtains the non-linear behavior pictured in Fig. \ref{climateDq}. We give a summary of the results found when adopting the above procedure with different quantiles in Table \ref{table}.  When the quantile is relatively low, a large sample of recurrences is used (corresponding to a larger average cutoff radius). This implies a lower spread of the distributions of $D_{1,r}(z)$. To the contrary, when the quantile is larger, the sample statistics contains fewer recurrences and the spread in $D_{1,r}(z)$ increases. Note that, although $\min(D_{1,r}(z))$ and $\max(D_{1,r}(z))$ seem to experience large variations with different quantiles, these have to be compared with the dimension of the phase space, which corresponds here to the number of grid points of the sea-level pressure fields used, 1060.  The relative variation is therefore very small, less than 1\%. Depending on the size of the datasets, one can then look for the best estimates of the $D_{1,r}(z)$ distribution for several values of $q$ and look for a range of stable estimates in $q$ space.

\begin{tabular}{|c|c|c|c|c|}
  \hline
  $p$ &  $D_{-\infty}=\min(D_{1,r}(z))$ & $D_1=\overline{D_{1,r}(z)}$ & $D_2$ & $D_\infty=\max(D_{1,r}(z))$  \\
  \hline
  0.95 & 20.7 & 11.2 & 9.5 & 6.0\\
  0.97 & 24.2 &12.2 & 10.2 &6.4 \\
  0.98 & 25.7 & 13.0 & 10.5 & 6.4\\
  0.99 & 29.1 & 14.3 & 11.1 & 6.5 \\
  0.995 & 39.6 & 15.5 & 11.3 & 6.2\\
  \hline

\end{tabular}
\captionof{table}{Values of $D_q$ found with different quantiles. For all of them, estimates $D_{-\infty}$ and $D_\infty$  match with the extrema of the local dimensions, and $D_1$ with the average of the local dimensions. Estimates of $D_2$ are larger than the value of 8.9 found with a different technique in \cite{D2}.}
 \label{table}

\begin{figure}[h!]
\includegraphics[width=.7\textwidth]{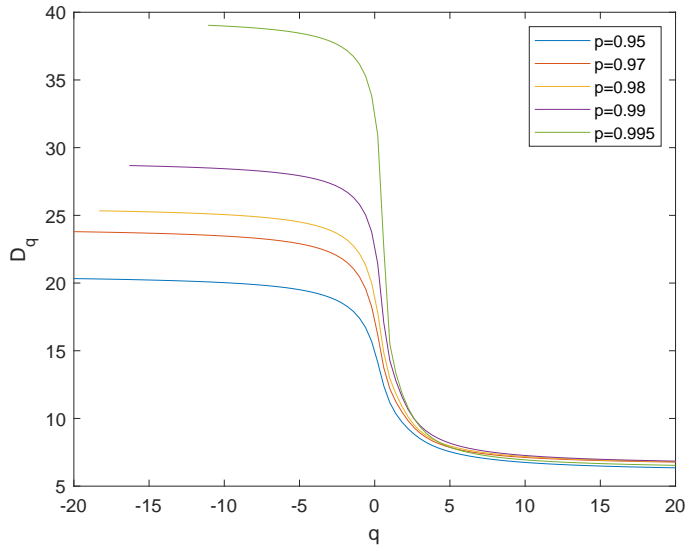}
\caption{$D_q$ spectrum obtained from climate data, using equation (\ref{8}) and the techniques described in the text. Curves are displayed for different values of the quantile $p$.}
\label{climateDq}
\end{figure}

                                                                                                                                                                                                                                                \vspace{1\baselineskip}
Generalized dimensions are a piece of information that is typical of the invariant, \textit{ultimate} measure, that is, the mathematical object that ergodic theory defines in the infinite time limit; nonetheless, as shown in this paper and in the papers quoted in the references, different techniques exist to extrapolate their value from data generated from observations which are not yet asymptotic. One could call the corresponding objects   {\em penultimate}, in analogy with extreme value statistics where the adjective {\em penultimate} is used to describe the probability distribution of extremes of a finite size sample. Indeed, while it is well known for a large class of systems \cite{LSY} that with probability one the local dimensions coincide with the information dimension, at finite resolution large deviation theory estimates the likelihood of deviations from this value. In this perspective, the spread of the experimentally observed values of $D_{1,r}(z)$ can be thought of as originating from the multifractal structure of the ultimate invariant measure, which in turn is revealed by the non-constant value of generalized dimensions.

\section{Discussion and Perspectives}

In this paper we have explored the relations between the spectrum of generalized dimensions $D_q$ and the recurrence properties of the dynamics. In fact, the former determines the large deviations of dynamical quantities such as return times \cite{SJ} and hitting times: \cite{CU} and Proposition \ref{prop-two} herein. The statistics of hitting times ruled by Proposition \ref{prop-1} also opens the way to new techniques to estimate generalized dimensions via recurrence properties. We have also seen that many of these concepts can be given a fruitful interpretation within extreme events theory, with a significant potential for application to experimental data.

The relation between extreme value theory and large deviations in the context of recurrence is a promising new field of research that we plan to extend to concrete situations in natural sciences, like climate, turbulence, and neural networks. The climate dynamics data shown in this paper are a first example of this endeavor. Here, atmospheric extreme events (like {\em e.g.} extratropical storms or blocking) produce large excursions of the local dimensions $D_{1,r}(z)$, which in turn are associated with large deviations of hitting and return times, in the proximity of special points in phase space.  Since the computation of local dimensions is relatively feasible also for systems with a high number of degrees of freedom, this can be used to trace the location of singularities originating the multifractal $D_q$ spectra.

\section{Aknowledgements}
Intensive numerical computations for this paper have been performed on the INFN cluster at the University of Pisa, Italy. \\
S. V. was supported  by the MATH AM-Sud Project
Physeco and by the project APEX Syst\'emes dynamiques: Probabilit\'es et Approximation
Diophantienne PAD funded by the R\'egion PACA (France). He also warmly thanks the LabEx Archimede (AMU University, Marseille), INdAM (Italy) for additional support and J-R. Chazottes and B. Saussol for interesting discussions related to this paper.
P.Y. and D.F. were supported by ERC grant no. 338965 (A2C2).

\section{References}

\end{document}